\newcommand{\ity}{\infty}
\newcommand{\F}{\mathfrak{F}}
\numberwithin{equation}{section}
\newtheorem{theorem}{Theorem}[section]
\newtheorem{lemma}[theorem]{Lemma}
\newtheorem{corollary}[theorem]{Corollary}
\newtheorem{proposition}[theorem]{Proposition}
\newtheorem{remark}[theorem]{Remark}
\newtheorem{example}[theorem]{Example}
\newtheorem{definition}[theorem]{Definition}
\thanks{The research work of the second  author is supported by ANRF(SERB) research grant TAR/2023/000197}
\begin{document}

\title [bungee set, escaping set and filled julia set of semigroup]{On dynamics of the bungee set and the  filled julia set of a  transcendental semigroup }

\author[M. Kumari]{Manisha Kumari}
\address{Department of Mathematics, Birla Institute of Technology Mesra
Ranchi--835 215, India}

\email{phdam10052.24@bitmesra.ac.in}

\author[D. Kumar]{Dinesh Kumar}
\address{Department of Mathematics, Birla Institute of Technology Mesra
Ranchi--835 215, India}

\email{dineshkumar@bitmesra.ac.in }

%\begin{frontmatter}
%\title{On dynamics of the bungee set and the  filled julia set of a  transcendental semigroup}
%\author[1]{Manisha Kumari }\ead{phdam10052.25@bitmesra.ac.in}
%\author[1]{Dinesh Kumar\corref{cor1}}\ead{dineshkumar@bitmesra.ac.in}
%\cortext[cor1]{Corresponding author}
%\address[1]{Department of Mathematics, Birla Institute of  Technology  Mesra, Ranchi-835215,  India}
%% \journal{Journal of Computational Science }
%\end{frontmatter}

\keywords{bungee set, filled Julia set, escaping set, completely invariant, asymptotic value}

\begin{abstract}

We have introduced the notion of the bungee set and the filled Julia set of a transcendental semigroup using Fatou-Julia theory. Numerous results of the bungee set of a single transcendental entire function have been generalized to a transcendental semigroup. For a transcendental semigroup having no oscillatory wandering domain, we provide some conditions for the containment of the bungee set inside the Julia set. The filled Julia set has also been explored in the context of a transcendental semigroup, and some of its properties are discussed. We have also explored some new features of the escaping set of a transcendental semigroup. The bungee set of a conjugate semigroup and an abelian transcendental semigroup has also been investigated. 
\end{abstract}

\keywords{bungee set, filled Julia set, escaping set, completely invariant, asymptotic value, transcendental semigroup}

\subjclass[2020]{37F10, 30D05}

\maketitle

 \section{Introduction}
Let $f$ be a transcendental entire function and for $n\in \mathbb{N}$, let $f^n$ indicate the $n-th$ composition of $f$ with itself. Here, we study how points in the complex plane behave under these iterations, which is referred to as the dynamics of the points. As not all points in the plane act in the same way, we classify them differently. One way of classification depends on the behavior of neighbourhoods around a point. In the complex plane, the Fatou set of $f$ is the set of points whose neighbourhood around each of these points exhibits stable and predictable behavior, and is denoted by $F(f)$. Whereas, if the neighbourhood displays chaotic behavior, the point belongs to the Julia set denoted by $J(f)$. There is another bifurcation of the complex plane based on the nature of the orbit of a point. These are the escaping set that consists of all those points whose orbit escapes to infinity and is denoted by $I(f)$, which was first introduced by Eremenko (1989)\cite{eremenko1989iteration}. The filled Julia set denoted by $ K(f)$ consists of all those points whose orbit is bounded, and the bungee set denoted by $BU(f)$ consists of all those points whose orbit is neither bounded nor escapes to infinity. 

% When a holomorphic function $f:\mathbb{C}\to \mathbb{C}$ is iterated n-times, we study how points in the complex plane behave under these iterations, which is referred to as the dynamics of the points. Let $f^n$ denote the n-th iterate of the function $f$ with itself. Since not all points in the plane act in the same way, we classify them differently. One way of classification depends on the behavior of neighbourhoods around a point. If the neighbourhood shows stable behavior, the point is considered to lie in the Fatou set; whereas, if the neighborhood displays chaotic behavior, the point belongs to the Julia set. There is another bifurcation of the complex plane based on the nature of the orbit these are, Escaping set (consists of all those points whose orbit escapes to infinity), denoted by $I(f)$, which was first introduced by Eremenko (1989); Filled Julia set (consists of all those points whose orbit is bounded, denoted by $ K(f)$; Bungee set (consists of all those points whose orbit contains at least two subsequences such that one subsequence is bounded and the other escapes to infinity), denoted by $BU(f).$

An intuitive extension of the dynamics related to the iteration of a single complex function involves studying the dynamics of compositions of two or more such functions. This naturally leads to the study of transcendental semigroups. Hinkkanen and Martin \cite{hinkkanen1996dynamics}\
made a primary contribution in this direction. They extended the classical theory of the dynamics of rational functions of a single complex variable to a broader framework involving a semigroup of rational functions. Their investigations opened the door for further exploration into semigroups of transcendental entire functions. Also, Nicks and Sixsmith \cite{nicks2019bungee} studied the bungee set of a quasiregular map of transcendental type.

Subsequently, prominent results from the dynamics of one transcendental entire function have been extended to this setting by several researchers, including Poon \cite{poon1998fatou}, ZhiGang \cite{huang2004dynamics}, and Huang and Cheng \cite{huang2013singularities}. In general, a transcendental semigroup $H$ refers to a semigroup which is generated by a family of transcendental entire functions $\{h_1,h_2,\dots \}$, where the semigroup operation is the composition of functions.

A collection $\F$ of meromorphic functions is said to be normal in a domain $D \subset \mathbb{C}$ if every sequence in $\F$ contains a subsequence that converges locally uniformly in $D$ either to a meromorphic function or to the constant $\ity$. The Fatou set $F(H)$ of a transcendental semigroup $H$ is defined to be the largest open subset on which the family of functions in $H$ forms a normal family. The Julia set $J(H)$ is the complement of the Fatou set $F(H)$. When a semigroup is generated by a single transcendental entire function $h$, it is represented by $[h]$. In this scenario, the Fatou set and Julia set are denoted by $F(h)$ and $J(h)$, respectively, which correspond to the classical Fatou and Julia sets in the theory of iteration of a single transcendental entire function.

The dynamics associated with the semigroups are more intricate than the dynamics of an individual function. Not all classical properties are preserved in the semigroup context. For instance, the Fatou set and the Julia set might not be completely invariant, and the Julia set may not be the whole complex plane even if it contains an interior point (see \cite{hinkkanen1996dynamics}).

In this paper, we have initiated the study of the bungee set of a transcendental semigroup $H$ to be denoted by $BU(H)$. We have extended the results of the dynamics of one transcendental entire function on the bungee set to the dynamics of the bungee set of a transcendental semigroup. It is shown that $BU(H)$ is always a non-empty and completely invariant set. We have also initiated the study of the filled Julia set of a transcendental semigroup $H$ to be denoted by $K(H)$ and discussed its several properties. Additionally, the dynamics of various types of semigroups, such as conjugate semigroups, abelian transcendental semigroups, and those with no oscillatory wandering domains have also been explored.\\
 \textbf{Structure of the paper.} In section \eqref{sec 2}, we define some preliminary definitions and basic facts which are relevant to our study, including singularities and oscillatory wandering domains. In section \eqref{sec 3}, we mention some known results on the bungee set of a single transcendental entire function, which we intend to generalize to the transcendental semigroup. The proof of our main results, Theorem \eqref{Theo 4.3},\eqref{Th4.5}, and \eqref{Th 4.15} is contained in section \eqref{sec 4}. We also show that various properties satisfied by the bungee set of a transcendental entire function are also satisfied by the bungee set of a transcendental semigroup. Also, the notion of the filled Julia set is defined for a transcendental semigroup, and some of its properties are discussed. Some new features of the escaping set of a transcendental semigroup have been investigated. In particular, we get a partition of the complex plane in terms of the bungee set $BU(H)$, the filled Julia set $K(H)$, and the escaping set $I(H)$. Finally, in the last section \eqref{sec 5}, we have established  some results on the containment of the bungee set within the Julia set under certain conditions. Furthermore, we have shown that for two conjugate transcendental semigroups, their bungee sets are equal.

\section{PRELIMINARIES ON TRANSCENDENTAL SEMIGROUP}\label{sec 2}
A transcendental semigroup $H$ is a semigroup which is generated by a family of transcendental entire functions $\{h_1,h_2,\dots\}$. It is denoted by $H=[h_1,h_2,\dots]$. Here, each $h\in H$ is a transcendental entire function and $H$ is closed under taking functional composition. If the generators commute among themselves, then the semigroup is called an abelian transcendental semigroup.
 Also, when the number of generators is finite, then the semigroup is said to be finitely generated.
% Before proceeding to particulars, 
We now introduce some basic definitions and notations for transcendental entire functions.\\
The singularities (singular values) of an entire function play an important role in its dynamics. The dynamics to a large extent is controlled by the presence of singularities. These are the  critical values, asymptotic values, and their finite limit points. For a transcendental entire function $f$, if there exists a point $z_0\in \mathbb{C}$ such that $f(z_0)=z$ and $f'(z_0)=0$ then $z_0$ and $z$ are said to be a critical point and a critical value of $f$, respectively. Also, $\eta \in \mathbb{C}$ is an asymptotic value of a transcendental entire function $f$ if there exists a curve $\gamma\to \infty$ such that $f(z)\to \eta$ as $z\to \infty $ along the curve $\gamma$.  
Denote by $Sing(f^{-1})$ the set of critical values, asymptotic values, and their finite limit points. The $Speiser \hspace{.2cm} class \hspace{.2cm} \mathcal{S}$ consists of those entire functions $f$ for which the set $Sing(f^{-1})$ is finite. Functions in $\mathcal{S}$ are said to be of finite type.  The $Eremenko-Lyubich\hspace{.2cm} class \hspace{.2cm} \mathcal{B}$ consists of those entire functions $f$ for which the  set $Sing(f^{-1})$ is bounded. Functions in $\mathcal{B}$ are said to be of bounded type. Clearly, $\mathcal{S}\subset\mathcal{B}.$   We now define the notions of critical point, critical value, and asymptotic value, respectively \cite{kumar2015dynamics}.
\begin{definition}
    Suppose a point $u_0\in \mathbb{C}$ is a critical point of some $h\in H$, then $u_0$ is called a critical point of $H$. When a point $z_0\in \mathbb{C}$ is a critical value of some $h\in H$, then $z_0$ is called a critical value of $H$. 
\end{definition}
\begin{definition}
    Suppose a point $a\in \mathbb{C}$ is an asymptotic value of some $h\in H$, then it is said to be an asymptotic value of $H$.
\end{definition}
% Our focus in this work is on transcendental entire functions whose singular values can be handled more conveniently. These are the $Eremenko-Lyubich\hspace{.2cm} class \hspace{.2cm} \mathcal{B}$ (where the set $Sing(f^{-1})$ is bounded) and the $Speiser \hspace{.2cm} class \hspace{.2cm} \mathcal{S}$ (where the set $Sing(f^{-1})$ is finite). Here, $Sing(f^{-1})$ is the set of critical values, asymptotic value, and their finite limit points. Note that a function of class $\mathcal{B}$ is said to be of bounded type, whereas the function of class $\mathcal{S}$ is said to be of finite type.\\
 We now recall the notion of an oscillatory wandering domain.
\begin{definition}
  For a transcendental entire function $f$, a domain $W\subset F(f)$ is called an oscillatory wandering domain of $f$, if  $W\subseteq BU(f)$, that is, $\{\infty, b\}$ is a limit function of $\{f^n\}$ on $W$ for some $b\in \mathbb{C}$, \cite{marti2020wandering}.
\end{definition}
\section {BACKGROUND ON THE BUNGEE SET OF A SINGLE TRANSCENDENTAL ENTIRE FUNCTION} \label{sec 3}
  In this section, we list the results related to the dynamics of a single transcendental entire function, which we are interested to generalize to a transcendental semigroup.
% Here, considering the nature of the orbit of a point, the complex plane is bifurcated into three disjoint parts. \\
% i) Escaping set (consists of all those points whose orbit escapes to infinity), denoted by $I(f)$, which was first introduced by Eremenko (1989);\\
% ii) Filled Julia set (consists of all those points whose orbit is bounded, denoted by $ K(f)$; \\
% iii) Bungee set (consists of all those points whose orbit contains at least two subsequences such that one subsequence is bounded and the other escapes to infinity), denoted by $BU(f).$
 \subsection{Essential features of the bungee set of a single transcendental entire function}
Suppose $f$ is a transcendental entire function. Then $BU(f)$ satisfies the following properties:\\
i) $BU(f) \neq\emptyset$,\cite{osborne2015set};\\
ii) $BU(f)\cap J(f)\neq\emptyset$, (see \cite{eremenko1989iteration});\\
iii) $BU(f)\cap I(f)=\emptyset$;\\
iv)  $BU(f)\cap K(f)=\emptyset$;\\
v) If $BU(f) \cap U\neq\emptyset$ ,where $U$ is a Fatou component then $U \subseteq BU(f)$ and $U$ becomes a wandering domain of $f$, \cite[Theorem 1.1]{osborne2015set};\\
vi) $J(f)= \partial BU(f)$, \cite[Theorem 1.1]{osborne2015set};  \\
vii) $BU(f)$ is not closed, \cite{kumar2024results};\\
viii) $BU(f)\subset J(f)$ for a transcendental entire function $f$ having no oscillatory wandering domain,  \cite{kumar2024results};
\\
ix) If $f$ and $g$ are two transcendental entire functions, then $g(z) \in BU(g\circ f)$ implies that $z \in BU(f\circ g )$, \cite[Theorem 2.3]{kumar2020dynamics};\\
x) Suppose $f$ and $g$ are commuting transcendental entire functions which have no finite asymptotic values, then $BU(f\circ g) \subseteq BU(f) \cap BU(g) \subseteq BU(f)\cup BU(g)$, \cite[Theorem 3.9]{kumar2020dynamics}.\\ We now show that $BU(f)$ is completely invariant for a transcendental entire function $f$.

\begin{theorem}

   For a transcendental entire function $f$, the bungee set is completely invariant.
\end{theorem}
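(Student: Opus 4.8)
The plan is to exploit the decomposition of the plane into the three orbit-type sets together with the elementary fact that both boundedness and escape to infinity are tail properties of an orbit. First I would record that every point has an orbit that is either bounded, escaping, or neither, so the plane splits as the disjoint union $\mathbb{C} = K(f) \cup I(f) \cup BU(f)$; equivalently $BU(f) = \mathbb{C} \setminus (K(f) \cup I(f))$. Since the complement of a completely invariant set is completely invariant, and a finite union of completely invariant sets is completely invariant, it then suffices to prove that $K(f)$ and $I(f)$ are each completely invariant, where I take ``completely invariant'' in the usual sense that $z \in A \iff f(z) \in A$, i.e. $f^{-1}(A) = A$.

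The core observation is that for any $z \in \mathbb{C}$ the orbit of $f(z)$, namely $\{f^n(f(z)) : n \geq 0\} = \{f^{n+1}(z) : n \geq 0\}$, is exactly the orbit of $z$ with its initial point deleted. Deleting (or prepending) a single finite point changes neither the boundedness nor the limiting behaviour of a sequence, so $\{f^n(z)\}$ is bounded if and only if $\{f^n(f(z))\}$ is bounded, and $f^n(z) \to \infty$ if and only if $f^{n+1}(z) \to \infty$. This immediately gives $z \in K(f) \iff f(z) \in K(f)$ and $z \in I(f) \iff f(z) \in I(f)$, which is precisely complete invariance of each set (forward invariance from the forward implication, backward invariance from the reverse).

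I would then assemble the conclusion: as $K(f)$ and $I(f)$ are completely invariant, so is their union, and hence so is the complement $BU(f) = \mathbb{C}\setminus(K(f)\cup I(f))$.

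Honestly, this statement carries no serious obstacle; its entire content is the remark that boundedness and convergence to $\infty$ are insensitive to one term of the orbit. The only point requiring a little care is bookkeeping around the definition of complete invariance, making sure \emph{both} the forward implication $f(BU(f)) \subseteq BU(f)$ and the backward implication $f^{-1}(BU(f)) \subseteq BU(f)$ are verified rather than just forward invariance. An equally short alternative avoids the complementation lemma and argues directly: if $z \in BU(f)$ then $f(z)$ can be neither bounded (otherwise $z$, differing by one point, would be bounded too) nor escaping (otherwise $z$ would escape), so $f(z) \in BU(f)$, and symmetrically for a preimage $w$ with $f(w) \in BU(f)$.
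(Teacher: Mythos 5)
Your proposal is correct and follows essentially the same route as the paper: both express $BU(f)=\mathbb{C}\setminus\bigl(K(f)\cup I(f)\bigr)$ and deduce complete invariance of $BU(f)$ from that of $K(f)$ and $I(f)$. The only difference is that the paper cites the complete invariance of $K(f)$ and $I(f)$ from the literature, whereas you prove it directly via the tail-of-orbit observation, making your argument self-contained.
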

\begin{proof}
    For a transcendental entire function $f$, bungee set, $BU(f)=\mathbb{C}\backslash (K(f)\cup I(f))$. Now, by the complete invariance of $K(f)$ and $I(f)$\cite{kumar2024results}, it follows that $BU(f)$ is also completely invariant. 
\end{proof}
We now give an explicit example of the bungee set of a rational map.
\begin{example} Consider the rational map 
   $R(z) = 1/z^{2}$. We show that $BU(R) = \{ z \in \mathbb{C} : |z|< 1 \cup |z|>1\}$.
   \end{example}
\begin{proof}
We first consider $|z|<1$. It can be easily seen that the orbit of any point in $|z|<1$ under $1/z^2$ contains two subsequences, such that one is bounded and the other escapes to infinity. This implies that $(|z|<1) \subseteq BU(R)$. We now consider $|z|>1$. Here again, it can be seen that the orbit of any point in $|z|>1$ under $1/z^2$ contains two subsequences, one is bounded and the other escapes to infinity. This also implies that $(|z|>1)\subseteq BU(R)$. Hence, combining both cases, we conclude that $BU(R) = \{ z \in \mathbb{C} : |z|< 1 \cup |z|>1\}$. 
    \end{proof}

\section{BUNGEE SET OF A TRANSCENDENTAL SEMIGROUP}\label{sec 4}
\
% This section comprises all the properties of a transcendental semigroup, which is associated with a transcendental entire function. Here, we show containment of the bungee set in the Julia set under some conditions. Also, a complete invariant of the bungee set for the transcendental semigroup. In addition, for a finitely generated abelian transcendental semigroup, we show that the closure of the semigroup and its conjugate semigroup are similar. \\
In this section, we explore the bungee set of a transcendental semigroup. As noted in the previous section, the bungee set has been studied for a single transcendental entire function. We now extend the study in the context of a transcendental semigroup. Furthermore, we examine the associated dynamical properties.
We now define the bungee set of a transcendental semigroup. 
\begin{definition} \label{Def 3.1}
    Let $H =[h_1,h_2, \dots]$ be a transcendental semigroup. The bungee set of $H$ denoted by $BU(H)$ is defined as
$BU(H) = \{z\in \mathbb{C} \hspace{.2cm}| \hspace{.2cm}$ there exist at least two subsequences say $\{h_{m_k}\}, \{h_{n_k}\} $ in $H$ and a constant $R>0$ such that $|h_{m_k}(z)|<R$ and $|h_{n_k}(z)| \to\infty \}$, where both $m_k$ and $n_k $ tends to infinity as $k\to \infty$.
\end{definition} 
% \begin{definition} \label{Def 3.1}
%     Let $H =[h_1,h_2, \dots]$ be a transcendental semigroup. The bungee set of $H$ denoted by $BU(H)$ is defined as
% $BU(H) = \{z\in \mathbb{C} \hspace{.2cm}| \hspace{.2cm}$ \text{there exist at least two subsequences say} $\{h_{m_k}\}, \{h_{n_k}\} $ in $H$ and a constant $R>0$ such that $|h_{m_k}(z)|<R$ and $|h_{n_k}(z)| \to\infty \}$, where both $m_k$ and $n_k $ tends to infinity as $k\to \infty$.
% \end{definition} 
 \begin{remark}\label{rem 4.1}
     
 It is easy to observe that  $\cup_{h\in H} BU(h) \subseteq BU(H) $.
 \end{remark}
 We now show that the bungee set of a transcendental semigroup is always a non-empty set.
\vspace{.2cm}

\begin{lemma}
   For a transcendental semigroup $H=[ h_1,h_2, \dots]$, $BU(H) \neq\emptyset.$
\end{lemma}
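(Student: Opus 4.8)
The plan is to reduce the statement to the already-known nonemptiness of the bungee set of a single transcendental entire function, recorded as property (i) in the previous section, and then to transport a witness point into $BU(H)$ using the closure of $H$ under composition together with Remark \ref{rem 4.1}.

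Concretely, I would begin by fixing any one generator, say $h_1$, of the semigroup $H=[h_1,h_2,\dots]$. Since $h_1$ is a transcendental entire function, property (i) guarantees that its single-function bungee set is nonempty; choose a point $z_0\in BU(h_1)$. By the definition of the bungee set of a single map, the orbit $\{h_1^n(z_0)\}_{n\geq 1}$ is neither bounded nor escaping to infinity, so one can extract two subsequences $\{h_1^{m_k}(z_0)\}$ and $\{h_1^{n_k}(z_0)\}$, with $m_k,n_k\to\infty$, such that $|h_1^{m_k}(z_0)|<R$ for some constant $R>0$ while $|h_1^{n_k}(z_0)|\to\infty$.

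The key observation is that, because $H$ is closed under composition, every iterate $h_1^n$ lies in $H$. Hence the two subsequences $\{h_1^{m_k}\}$ and $\{h_1^{n_k}\}$ are sequences of elements of $H$, and they exhibit at $z_0$ exactly the bounded/escaping behavior demanded by Definition \ref{Def 3.1}. Therefore $z_0\in BU(H)$, and so $BU(H)\neq\emptyset$. Equivalently, this is precisely the content of the inclusion $BU(h_1)\subseteq\bigcup_{h\in H}BU(h)\subseteq BU(H)$ from Remark \ref{rem 4.1}, applied to the nonempty set $BU(h_1)$.

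There is no serious obstacle here: the only point requiring care is the verification that the witnessing subsequences produced by iterating a single generator genuinely satisfy the requirements of the semigroup definition, namely that their indices tend to infinity and that they are bona fide members of $H$, both of which follow immediately from closure under composition. It is worth emphasizing that the argument does not use the full generating family at all; any single transcendental generator suffices to furnish a point of $BU(H)$.
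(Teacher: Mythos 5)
Your proposal is correct and follows essentially the same route as the paper: both reduce to the nonemptiness of $BU(h)$ for a single transcendental entire function (property (i)) and then invoke the inclusion $\bigcup_{h\in H}BU(h)\subseteq BU(H)$ of Remark \ref{rem 4.1}. Your explicit verification via the subsequences $\{h_1^{m_k}\}$ and $\{h_1^{n_k}\}$ simply unpacks that remark in detail, so there is no substantive difference.
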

  \begin{proof}
  We already know that for a transcendental entire function $f,$ $BU(f) \neq \emptyset.$ Also, the elements of $H$ are the finite compositions of its generators. For, any $h\in H,$ we have $BU(h) \neq\emptyset$. Using remark \eqref{rem 4.1}, $ \cup_{h\in H} BU(h)\subseteq BU(H).$   
   This shows that $BU(H)\neq\emptyset$. 
  \end{proof}
  As in the case of a single transcendental entire function, the following results show that the bungee set of a transcendental semigroup always intersects with the Julia set of the semigroup.
\begin{theorem}\label{Theo 4.3}
   Consider a transcendental semigroup $H=[ h_1,h_2, \dots]$.
    Then $BU(H) \cap J(H)\neq\emptyset.$
\end{theorem}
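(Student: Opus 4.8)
The plan is to deduce this from the single-function fact $BU(f)\cap J(f)\neq\emptyset$ (property~(ii) of Section~\ref{sec 3}) by transferring a single point from the dynamics of one element of $H$ to the dynamics of the whole semigroup. The bridge between the two settings is the inclusion $J(h_1)\subseteq J(H)$ for one fixed generator $h_1$, which I would establish first.

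To prove $J(h_1)\subseteq J(H)$, note that since $H$ is closed under composition all iterates $h_1^n$ $(n\in\mathbb{N})$ belong to $H$, so $\{h_1^n : n\in\mathbb{N}\}$ is a subfamily of $H$. Normality of a family on an open set is inherited by every subfamily, so on any open set where $H$ is normal the iterates $\{h_1^n\}$ are normal as well; this gives $F(H)\subseteq F(h_1)$, and taking complements yields $J(h_1)\subseteq J(H)$. Now I would apply property~(ii) to the transcendental entire function $h_1$ to select a point $z_0\in BU(h_1)\cap J(h_1)$. Since $h_1\in H$, Remark~\ref{rem 4.1} gives $BU(h_1)\subseteq \cup_{h\in H}BU(h)\subseteq BU(H)$, so $z_0\in BU(H)$; and $z_0\in J(h_1)\subseteq J(H)$ by the inclusion just proved. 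Hence $z_0\in BU(H)\cap J(H)$, so this intersection is non-empty.

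The step I expect to be the crux is the inclusion $J(h_1)\subseteq J(H)$, which is the only place where the semigroup structure is genuinely used; the remainder is bookkeeping. I would also make explicit that $z_0\in BU(h_1)$ is consistent with Definition~\ref{Def 3.1}: the bounded and the escaping subsequences of the orbit $\{h_1^n(z_0)\}$ are witnessed by maps $h_1^{m_k}, h_1^{n_k}\in H$ with $m_k,n_k\to\infty$, which is exactly the data the definition of $BU(H)$ requires, so there is no incompatibility between the classical bungee set and the semigroup bungee set. This last point is precisely the content of Remark~\ref{rem 4.1}.
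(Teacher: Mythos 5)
Your proof is correct and is essentially the paper's argument run forwards rather than by contradiction: both rest on the same three ingredients, namely the single-function fact $BU(h)\cap J(h)\neq\emptyset$, the containment $F(H)\subseteq\bigcap_{h\in H}F(h)$ (equivalently, your inclusion $J(h_1)\subseteq J(H)$), and Remark~\ref{rem 4.1}. Your direct formulation, selecting a single point $z_0\in BU(h_1)\cap J(h_1)$ and pushing it into both $BU(H)$ and $J(H)$, is if anything cleaner than the paper's contrapositive arrangement.
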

\begin{proof}

% Let us prove this theorem by contradiction, if possible let $BU(H) \cap J(H)= \emptyset$ this implies that $BU(H) \subset \{J(H)\}^c$ which indicate that $BU(H) \subset F(H)$ this contradicts our supposition because the bungee set does not intersect with the Fatou set ( as  Fatou set are the set where the dynamics are predictable but in bungee set dynamics are unpredictable). Hence ,$BU(H) \cap J(H)\neq\emptyset.$
 Suppose, on the contrary, that $BU(H)\cap J(H)=\emptyset$. Then $ BU(H)\subset F(H).$ This implies that $\cup_{h\in H}BU(h)\subset F(H)$ which is further contained in $ \cap_{h\in H}F(h) $ \cite{kumar2015dynamics}. This shows $\cup_{h\in H} BU(h)\subset F(h)$ for all $h\in H$. This gives $BU(h)\subset F(h)$ for some $h\in H$. However, for a single transcendental entire function, we know that $BU(h)\cap J(h)\neq \emptyset$ \cite[Corollary 1.4]{osborne2015set}. This leads to a contradiction and hence the result.
\end{proof}
  % \begin{proposition}
  %   Let $H=[ h_1,h_2 ]$ be a transcendental semigroup where $h_1 ,h_2$ are functions of class $B$. Then,  $BU(H) \cap J(H) \neq \emptyset$.\\
  %     \end{proposition}
  %   \begin{proof}

  %    As $h_1, h_2$ are of class $\mathcal{B}$  \ therefore, for any $h\in H,\, h$ is of class $\mathcal{B} .$ Also, $BU(h) \subset J(h) $ and by definition  $\cup _{h\in H}BU(h) \subset BU(H).$ 
  %   This implies that 
  %    $BU(H) \subset J(H)$  which further implies that $BU(H) \cap J(H) \neq \emptyset$.
  %     \end{proof}
      We now show the containment of the bungee set of a semigroup inside the union of the bungee sets of individual functions in the semigroup.
      \begin{theorem}\label{th4.5}
 For a transcendental semigroup $H = [h_1,h_2, \dots]$, $BU(H) \subseteq \cup_{h\in H}BU(h)$.
        
      \end{theorem}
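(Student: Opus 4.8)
The plan is to establish the inclusion opposite to Remark \eqref{rem 4.1} by reducing it to a statement about complements. Fix $z\in BU(H)$; by Definition \eqref{Def 3.1} there are sequences $\{h_{m_k}\},\{h_{n_k}\}\subseteq H$ and a constant $R>0$ with $|h_{m_k}(z)|<R$ for all $k$ and $|h_{n_k}(z)|\to\infty$. The goal is to exhibit a single $h\in H$ whose iterates $\{h^j(z)\}_j$ are simultaneously non-escaping along one subsequence and unbounded along another, that is, $z\in BU(h)$. Since each generator and hence each element of $H$ is a transcendental entire function, I would use the decomposition $BU(h)=\C\setminus(K(h)\cup I(h))$ recorded in the proof of the first theorem of Section \eqref{sec 3}, together with the partition $\C=BU(H)\cup K(H)\cup I(H)$ for the semigroup.

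The first step is to rewrite the target inclusion in complementary form: $z\notin\bigcup_{h\in H}BU(h)$ is equivalent to $z\in K(h)\cup I(h)$ for \emph{every} $h\in H$, so it suffices to prove the contrapositive. Thus I would assume that for every element $h\in H$ the iteration orbit $\{h^j(z)\}_j$ is either bounded or escaping, and try to conclude that the full semigroup orbit $\{h(z):h\in H\}$ is itself either bounded (placing $z\in K(H)$) or escaping in the sense of Definition \eqref{Def 3.1} (placing $z\in I(H)$); in either case $z\notin BU(H)$. The witnessing subsequences $\{h_{m_k}(z)\}$ and $\{h_{n_k}(z)\}$ must then be reconciled against this dichotomy, which is where the coexistence of a bounded and an escaping subsequence has to be forced into a single map.

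The hard part will be exactly this passage from arbitrary compositions of generators to iterates of one fixed element: the witnesses $h_{m_k}$ and $h_{n_k}$ are words of growing length in the generators, whereas membership in $BU(h)$ requires that a \emph{single} $h$ reproduce both behaviours under its own iteration $h,h^2,h^3,\dots$. To bridge this gap I would attempt to select $h$ from among the witnessing elements themselves, for instance by a pigeonhole argument on an outermost generator that recurs infinitely often in $\{h_{n_k}\}$, or by taking $h$ to be one escaping witness and controlling $|h^j(z)|$ for $j\ge 2$ using that the values $h_{m_k}(z)$ remain in the disc $\{|w|<R\}$. Verifying that some such $h$ genuinely satisfies $z\in BU(h)$ is the crux, and I expect this to be the step that needs the full semigroup structure rather than a soft set-theoretic manipulation; it may well require invoking the absence of oscillatory wandering domains (as in property (viii) of Section \eqref{sec 3}) or the commutativity and no-finite-asymptotic-value hypotheses of property (x), without which the inclusion is not evident.
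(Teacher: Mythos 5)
Your proposal never actually proves the statement: it sets up the contrapositive reduction and then stops at what you yourself call the crux. Concretely, after assuming that every $h\in H$ has $z\in K(h)\cup I(h)$, you must rule out the mixed scenario in which different elements behave differently at $z$ --- say $z\in K(f)$ for one element $f$ and $z\in I(g)$ for another element $g$. In that scenario the sequences $\{f^n\}$ and $\{g^n\}$ are witnesses, under Definition \eqref{Def 3.1}, that $z\in BU(H)$, while no single element obviously puts $z$ in its bungee set; so the conclusion $z\notin BU(H)$ that your contrapositive needs would simply fail unless this case is excluded. Your suggested remedies (pigeonhole on an outermost generator, taking $h$ to be an escaping witness and controlling $|h^j(z)|$ for $j\ge 2$) are named but not carried out, and your closing concession that the inclusion ``is not evident'' without extra hypotheses such as commutativity or the absence of finite asymptotic values is an admission that the argument is incomplete. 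As a proof, therefore, the proposal has a genuine gap: the passage from semigroup-level bungee behaviour to single-element bungee behaviour is identified but never bridged.

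It is worth telling you how this compares with the paper's own proof, because the comparison is instructive. The paper argues along exactly the same contrapositive lines and passes over the very step you flag: from ``for each $h\in H$, either $h^n(z_0)\to\infty$ or $h^n(z_0)$ stays bounded'' it jumps to ``the orbit of the point $z_0$ either escapes to infinity or is bounded,'' and thence to $z_0\notin BU(H)$. That jump tacitly assumes all elements of $H$ exhibit the \emph{same} behaviour at $z_0$, i.e., it ignores precisely the possibility $z_0\in K(f)\cap I(g)$ described above. So your diagnosis of where the difficulty lies is accurate --- indeed sharper than the paper's treatment, since what you isolate as the hard step is asserted there without justification. But accuracy of diagnosis is not a proof: because you do not resolve the crux (and suggest it may be unresolvable without additional hypotheses), the proposal cannot be accepted as a proof of the theorem as stated.
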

      \begin{proof}
            Suppose $z_0 \notin \cup BU(h)$. Then $z_0 \notin BU(h)$ for all $h\in H.$ Hence, either $h^n(z_0) \to \infty$ as $n\to\infty$ or $h^n(z_0)$ stays bounded $\forall \, h \in H$. In other words, the orbit of the point $z_0$ either escapes to infinity or is bounded. Now, any $h\in H$ is a finite composition of the generators. Therefore, its orbit will either escape to infinity or be bounded. Thus, $z_0 \notin BU(H)$ as well. Hence, the result.\\Using the above theorem and remark \ref{rem 4.1}, it now follows that  $BU(H)= \cup_{h\in H} BU(h)$.\\
            \end{proof}
     
          % Suppose $z_0 \in BU(H)$. This implies that $z_0$ does not belong to the escaping set or the filled Julia set of $H$. First, suppose that $ z_0\notin I(H).$ This implies  that $ z_0\notin \cap_{h\in H}I(h)$ and hence, $ z_0\notin I(f)$ for some $f \in H$. Thus, $ z_0\in K(f) \cup BU(f)$. If $ z_0\in K(f)$ then $|f^n(z_0)<R|$ for some $R>0$ and for all $n\in\mathbb{N}.$ This contradicts the fact that $ z_0\in BU(H)$. Again, if $ z_0\notin K(H)$, then $ z_0\notin \cap_{h\in H}K(h).$ This implies that $ z_0\notin K(g)$ for some $g \in H$. This further implies that $ z_0\in I(g) \cup BU(g)$. If  $ z_0\in I(g)$ then $|g^n(z_0)| \to \infty$ as $n \to \infty$ which contradicts the fact that $ z_0\in BU(H)$. Hence, $ z_0\in \cup_{h\in H}BU(h)$ and therefore, $BU(H) \subseteq\cup_{h\in H}BU(h)$.           
          
%     Using this theorem, the definition of the bungee set of a semigroup now becomes
%       \begin{definition}
%             Let $H =[h_1,h_2, \dots]$ be a transcendental semigroup. The bungee set of $H$ denoted by $BU(H)$ is defined as
% $BU(H) = \{z\in \mathbb{C} \hspace{.2cm}| \hspace{.2cm}$ there exist at least two subsequences say $\{h_{m_k}\}, \{h_{n_k}\} $ and a constant $R >0$ such that $|h_{m_k}(z)|<R$ and $|h_{n_k}(z)| \to\infty \}$  where both $m_k$ and $n_k \to\infty$ as $k\to \infty$. 

% \end{definition}
Now, we recall the definition of a wandering domain of a transcendental semigroup $H$. A Fatou component $U$ is called a wandering domain of $H$ if the set $\{U_h: h\in H\}$ is infinite, where $U_h$ is the component of $F(h)$ containing $h(U)$. Otherwise, it is called a pre-periodic component of $F(H)$,\cite[Definition 3.5]{kumar2015dynamics}.
We now propose a definition of the oscillatory wandering domain of a transcendental semigroup. 
\begin{definition}
   For a transcendental semigroup $H$, a wandering domain $U\subseteq F(H)$ is called an oscillatory wandering domain, if $U\subseteq BU(H)$ and for some $b\in \mathbb{C}$,$\{\infty,b\}$ is a limit function of every sequence $\{h_n\}$ of $H$.   
\end{definition}
The next result provides an extension of \cite[Theorem 1.1]{osborne2015set} in the context of a transcendental semigroup. This provides a connection between the bungee set and the Fatou component that intersects it.
\begin{theorem} \label{Th4.5}
         Suppose $H$ is a transcendental semigroup and $U \subset F(H)$ is a Fatou component such that $U \cap BU(H)\neq \emptyset$. Then\\ (a) $U \subset BU(H)$ and $U$ is a wandering domain of $H$;\\
       (b) $J(H) = \partial BU(H)$.
     \end{theorem}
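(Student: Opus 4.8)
The plan is to handle the two assertions in turn, since part (a) supplies exactly the structural input needed for part (b).

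For part (a), I would start from a point $z_0 \in U \cap BU(H)$ and exploit normality of $H$ on the Fatou component $U$. By Definition \ref{Def 3.1} there are subsequences $\{h_{m_k}\}$ and $\{h_{n_k}\}$ in $H$ with $|h_{m_k}(z_0)|$ bounded and $|h_{n_k}(z_0)| \to \infty$. Since $U \subset F(H)$, the family $H$ is normal on $U$, so after passing to further subsequences $\{h_{m_k}\}$ converges locally uniformly on $U$ to a limit that is finite at $z_0$ (hence to a holomorphic function $\phi : U \to \C$), while $\{h_{n_k}\}$ converges locally uniformly on $U$ to the constant $\infty$. Evaluating these two limits at an arbitrary $w \in U$ gives $|h_{m_k}(w)| \to |\phi(w)| < \infty$ and $|h_{n_k}(w)| \to \infty$, so $w$ has both a bounded and an escaping subsequence, i.e. $w \in BU(H)$; thus $U \subseteq BU(H)$. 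To see that $U$ is a wandering domain I would argue by contradiction: a non-wandering (pre-periodic) component of $H$ has $\{U_h : h \in H\}$ finite, and on such a component the admissible limit functions are of a single type, either all finite (so the orbit is bounded and $U \subseteq K(H)$) or the constant $\infty$ (so $U \subseteq I(H)$). The simultaneous presence of the finite limit $\phi$ and the limit $\infty$ produced above is incompatible with finiteness of $\{U_h\}$, forcing $U$ to be wandering.

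For part (b) the two inclusions come from different sources. For $\partial BU(H) \subseteq J(H)$, part (a) is decisive: any Fatou component meeting $BU(H)$ is contained in it, so $BU(H) \cap F(H)$ is a union of entire Fatou components and is therefore open, and so is its complement within $F(H)$. Hence $BU(H)$ is relatively clopen in $F(H)$ and has no boundary point there, which gives $\partial BU(H) \subseteq J(H)$. For the reverse inclusion I would show that every Julia point lies both in $\overline{BU(H)}$ and in $\overline{\C \setminus BU(H)}$. Using Theorem \ref{Theo 4.3}, pick $\zeta \in BU(H) \cap J(H)$; complete invariance of $BU(H)$ together with the blow-up (backward-orbit density) property of $J(H)$ shows that $\bigcup_{h \in H} h^{-1}(\zeta) \subseteq BU(H)$ is dense in $J(H)$, so $J(H) \subseteq \overline{BU(H)}$. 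Symmetrically, choosing an escaping Julia point $\xi \in I(H) \cap J(H)$ and using $I(H) \subseteq \C \setminus BU(H)$ (the semigroup analogue of property (iii), consistent with the partition $\C = K(H) \cup I(H) \cup BU(H)$) with the same density gives $J(H) \subseteq \overline{\C \setminus BU(H)}$. Combining, $J(H) \subseteq \partial BU(H)$, and together with the first inclusion this yields equality.

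I expect the main obstacle to lie in the facts special to the semigroup setting rather than in the normal-families bookkeeping. In part (a) the delicate point is justifying that a pre-periodic component of $H$ cannot carry both a finite and an infinite limit function, which requires a dichotomy for the dynamics on periodic components of $H$ analogous to the single-function classification. In part (b) the subtle inputs are the backward-orbit density property of $J(H)$ and the non-emptiness $I(H) \cap J(H) \neq \emptyset$, which are genuinely harder here because $J(H)$ need not have empty interior and the completely invariant structure of a semigroup is weaker than for a single map. I would isolate these as lemmas cited from the semigroup literature and keep the normality argument as the self-contained core.
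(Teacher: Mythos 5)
Your normality argument for the containment $U \subseteq BU(H)$ is correct and self-contained (indeed it avoids the paper's reliance on the identity $BU(H)=\cup_{h\in H}BU(h)$), and your clopen argument for $\partial BU(H) \subseteq J(H)$ in part (b) is also sound. The first genuine gap is the wandering-domain half of part (a): you assert that on a pre-periodic component of $F(H)$ the admissible limit functions are "of a single type", either all finite or the constant $\infty$. This dichotomy is exactly what needs proof, and no such classification of periodic or pre-periodic components exists for transcendental semigroups in the cited literature; the limit functions on a component of $F(H)$ arise from arbitrary sequences of compositions, not from the iterates of one map, so the single-function classification (attracting, parabolic, Siegel, Baker) does not apply, and finiteness of $\{U_h : h\in H\}$ does not visibly contradict the coexistence of a finite limit $\phi$ and the limit $\infty$. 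The paper circumvents this entirely: by Theorem~\ref{th4.5}, $U$ meets $BU(h)$ for some \emph{single} $h\in H$; since $F(H)\subseteq F(h)$, $U$ lies in a Fatou component $\widetilde{U}_h$ of $h$ meeting $BU(h)$, so Osborne--Sixsmith's single-function theorem gives $\widetilde{U}_h\subseteq BU(h)\subseteq BU(H)$ and $\widetilde{U}_h$ wandering for $h$, which is then transferred to $U$.

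The second gap is in your proof of $J(H)\subseteq \overline{\mathbb{C}\setminus BU(H)}$. It requires (i) a point $\xi\in I(H)\cap J(H)$, but nothing in the paper or the cited semigroup literature guarantees this --- $I(H)$ is defined by demanding that \emph{every} sequence in $H$ diverges at the point, and for a general transcendental semigroup even $I(H)\neq\emptyset$ is not established (only inclusions like $I(H)\subseteq\cap_{h\in H}I(h)$ and results for abelian semigroups are available); and (ii) forward invariance of $BU(H)$, so that backward orbits of $\xi$ stay outside $BU(H)$ --- but the paper obtains complete invariance of $BU(H)$ only for abelian semigroups, and in general only backwards invariance of $BU(H)$ is straightforward. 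Both difficulties vanish if you replace escaping Julia points by repelling periodic points: by the paper's Lemma~\ref{lem 4.8} these are dense in $J(H)$, and they lie in $\mathbb{C}\setminus BU(H)$ because a bungee point is never periodic, giving $J(H)\subseteq\overline{\mathbb{C}\setminus BU(H)}$ immediately. This is in effect the paper's route: it verifies the hypotheses of Osborne--Sixsmith's Lemma~2.1 (Lemma~\ref{lem 4.7}) with $E=BU(H)$, using the density of repelling periodic points and $\operatorname{int}BU(H)\cap J(H)=\emptyset$ (Lemma~\ref{lem 4.9}). Your remaining ingredient, a blow-up/backward-orbit density lemma for $J(H)$, is plausible via Montel but would also have to be proved; it is of the same depth as the lemmas the paper proves, so it is not a fatal objection, unlike items (i) and (ii).
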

     \begin{proof}

   %  Our core objective is to examine some of the basic properties satisfied by $BU(f)$ where $f$ is the transcendental entire function. After studying the properties of $BU(f)$ of transcendental entire function we extend ourselves to explore the properties of  $BU(f)$ for the compositions of two or more transcendental entire functions, which introduces the realms of transcendental semigroup. In addition, we provide an important relation between the bungee set of composite entire functions with those of the individual functions. Throughout the paper, by an entire function, we refer to a transcendental entire, unless otherwise stated.\\ 
     As $U \,\cap BU(H) \neq \emptyset$, we have $U\cap \cup_{h\in H}BU(h) \neq \emptyset$ using Theorem \eqref{th4.5}. This implies $U\cap BU(h) \neq \emptyset$ for some $h\in H$. In addition, $U\subset F(H) \subset F(h)$ as $F(H)\subset\cap_{h\in H}F(h)$, \cite{kumar2015dynamics}. Let $U\subset \Tilde{U_h} $, where $\Tilde{U_h}$ is a Fatou component of  $h$. Then $\Tilde{U_h} \cap BU(h)\neq \emptyset$ and using \cite[Theorem 1.1]{osborne2015set}, $\Tilde{U_h}$ is a wandering domain of $h$. Also, $\Tilde{U_h}\subset BU(h)\subset \cup_{h\in H} BU(h)=BU(H).$
     As $U\subset \Tilde{U_h} $, it follows that, $U$ is also a wandering domain of $H$ and $U\subset BU(H)$.
     \vspace{.2cm}\\
 To prove part (b), we will utilize the following three lemmas, where we provide the proof of the first two lemmas.
   
     \begin{lemma}\label{lem 4.8}
        For a given transcendental semigroup $H$, the set of repelling periodic points is dense in $J(H)$.
     \end{lemma}
     \begin{proof}
         % We know that $J(H)= \overline{\cup_{h\in H}J(h)}$ \cite{poon1998fatou} for each $h \in H.$  Let Rep (h) denote the set of repelling periodic points of $h.$ \\ Suppose $Rep(h_1)=\{z_{11},z_{12},\dots\}$\\$Rep(h_2)=\{z_{21},z_{22},\dots\}$\\$Rep(h_3)=\{z_{31},z_{32},\dots\}$\\ \vdots \hspace{.5cm}  \vdots \hspace{.5cm}  \vdots \hspace{.8cm}  \vdots \hspace{.5cm} \vdots \hspace{.5cm} \vdots\\$Rep(h_n)=\{z_{n1},z_{n2},\dots\}$ \\ \vdots \hspace{.5cm}  \vdots \hspace{.5cm}  \vdots \hspace{.8cm}  \vdots \hspace{.5cm} \vdots \hspace{.5cm} \vdots\\ Now closure of the repelling periodic points is dense in the Julia set \cite{} therefore,  \vspace{.2cm}\\$J(h_1)=\overline{Rep (h_1)}$\\$J(h_2)=\overline{Rep (h_2)}$\\$J(h_3)=\overline{Rep (h_3)}$\\\vdots \hspace{.5cm}  \vdots \hspace{.5cm}  \vdots \hspace{.5cm}  \vdots \\ $J(h_n)=\overline{Rep (h_n)}$\\ \vdots \hspace{.5cm}  \vdots \hspace{.5cm}  \vdots \hspace{.5cm}  \vdots \\ $J(H)=\overline {\{ J(h_1)\cup J(h_2)\cup \dots\cup J(h_n) \cup\dots\}}$\\$\implies J(H)= \overline{\{\overline{Rep J(h_1)} \cup \overline{Rep (h_2)}\cup \dots \cup \overline{Rep J(h_n)}\cup \dots\}}$.  This shows that the collection of repelling periodic points of $H$ is a countable set which is dense in $J(H)$. Hence, the collection of all the repelling periodic points is dense in J(H).
        We know that $J(H)=\overline{\cup_{h\in H}J(h)}$ \cite[Theorem 4.2]{poon1998fatou}. As $H$ has countably many elements, we list them as $H=[ h_1,h_2,\dots h_n,\dots ]$. Denote by $Rep\{h\}$ the set of repelling periodic points of $h$. Also, for any transcendental entire function $h$, $Rep\{h\}$ is dense in $J(h)$, that is, $J(h)=\overline{Rep\{h\}}$ \cite[Theorem 4]{bergweiler1993iteration}. Now $J(H)=\overline{J(h_1)\cup J(h_2)\cup \dots \cup J(h_n)\cup \dots}$ \\ $= \overline{\overline{Rep\{ h_1\}}\cup \overline{Rep\{ h_2\}} \cup \dots \cup \overline{Rep\{ h_n\}} \cup \dots}$. \\Let $A=\{ Rep\{h_1\}, Rep\{h_2\},\dots ,Rep\{h_n\}, \dots\}$ be the set of repelling periodic points of $H$. We conclude that the collection of repelling periodic points of $H$ is dense in $J(H)$.
        \end{proof}
        \begin{lemma}\label{lem 4.9}
             For a transcendental semigroup $H=[h_1,h_2,\dots]$, $ int BU(H)\cap J(H)=\emptyset$. 
        \end{lemma}
        \begin{proof}
           Suppose, on the contrary, that $int BU(H)\cap J(H)\neq \emptyset$ and let $z_0\in intBU(H)\cap J(H)$. Then there exists an open neighborhood $U$ of $z_0$ such that $z_0 \in U \subset BU(H).$ Also, $z_0 \in J(H)= \overline{\cup _{h \in H}Rep\{h\}}$. Every neighborhood of $z_0$ intersects with the repelling periodic points of $H$. But $BU(H)$ does not contain any periodic point. This leads to a contradiction, and hence the result.
        \end{proof}
          \begin{lemma}\label{lem 4.7}\cite[Lemma 2.1]{osborne2015set}
         Suppose $f$ is a transcendental entire function and $E\subset \mathbb C$ contains at least three points. Also, suppose that $E$ is backwards invariant under $f,$ that \hspace{.1cm} $int E \cap J(f)= \emptyset$, and that every component of F(f) that meets $E$ is contained in $E$. Then $\partial E=J(f)$.
     \end{lemma}
     \textbf{Proof of part (b) of Theorem 4.7}
         We prove this using Lemma \eqref{lem 4.7}, with $E$ replaced by $BU(H)$. We know that the bungee set is an infinite set which is backwards invariant. Also, by using Lemma \eqref{lem 4.8}, the repelling periodic points of $H$ are dense in $J(H)$. By definition, $BU(H)$ contains no periodic points. It follows from Lemma \eqref{lem 4.9} that $ int BU(H)\cap J(H)=\emptyset$. By part (a) of Theorem \eqref{Th4.5}, every component of $F(H)$ that meets $BU(H)$ is contained in $BU(H)$. Hence, $J(H)=\partial BU(H)$.
     \end{proof}
The next result provides a condition under which the bungee set is contained in its Julia set. The proof is immediate.
\begin{lemma}
   Suppose $H=[h_1,h_2,\dots]$ is an abelian transcendental semigroup. If $\infty$ is not a limit function of any subsequence in $H$ in any component of $F(H)$, then $BU(H) \subset J(H)$.
\end{lemma}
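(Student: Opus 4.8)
The plan is to prove the equivalent inclusion $BU(H)\cap F(H)=\emptyset$ and to argue by contradiction. Suppose there were a point $z_0\in BU(H)\cap F(H)$, and let $U$ be the component of $F(H)$ containing $z_0$. Since $U\cap BU(H)\neq\emptyset$, part (a) of Theorem \eqref{Th4.5} applies at once and yields $U\subset BU(H)$, with $U$ a wandering domain of $H$. So the whole Fatou component on which $z_0$ sits lies inside the bungee set.

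The decisive step is to extract a limit function equal to $\infty$ directly from the bungee condition. By Definition \eqref{Def 3.1} applied at $z_0$, there is a subsequence $\{h_{n_k}\}$ in $H$ with $|h_{n_k}(z_0)|\to\infty$. As $U\subset F(H)$, the family $H$ is normal on $U$, so after passing to a further subsequence we may assume $\{h_{n_k}\}$ converges locally uniformly on $U$; because each $h_{n_k}$ is entire, the limit is either a holomorphic function or the constant $\infty$. Since $|h_{n_k}(z_0)|\to\infty$, the limit cannot be finite at $z_0$, hence it is identically $\infty$ on $U$. Thus $\infty$ is a limit function of a subsequence of $H$ in the Fatou component $U$, which is exactly what the hypothesis forbids.

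This contradiction shows that no such $z_0$ exists, so $BU(H)\cap F(H)=\emptyset$ and therefore $BU(H)\subset J(H)$. I expect the only point requiring any care to be the standard normal-family fact that a locally uniform limit of a sequence of entire functions is either holomorphic or identically $\infty$, so that a single escaping value at $z_0$ forces the limiting behaviour $\infty$ throughout $U$; once this is granted the argument is genuinely immediate. The abelian hypothesis is not exploited by the contradiction itself — Theorem \eqref{Th4.5}(a) and normality do all the work — and serves mainly to place us in the regime where the limit-function behaviour of $H$ on components of $F(H)$ is controlled, which is precisely the setting in which ruling out $\infty$ as a limit function is the natural assumption.
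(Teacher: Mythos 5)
Your proof is correct and is precisely the argument the paper leaves implicit (its "proof" consists only of the remark that the proof is immediate): a bungee point $z_0$ in a Fatou component $U$ yields, via normality of $H$ on $U$ and the standard dichotomy that a locally uniform spherical limit of holomorphic functions on a connected domain is either holomorphic or identically $\infty$, a subsequence of $H$ with limit function $\infty$ on $U$, contradicting the hypothesis. Your closing observations are also accurate: the abelian hypothesis is never used, and the appeal to Theorem \eqref{Th4.5}(a) is dispensable, since the normality argument at $z_0$ alone produces the contradiction.
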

For further investigation, we now introduce the notion of the filled Julia set of a transcendental semigroup.
% \end{theorem}

\begin{definition}
    Let $H=[h_1,h_2,\dots]$ be a transcendental semigroup. The filled Julia set of $H$ denoted by $K(H)$ is defined as $K(H)=\{ z\in \mathbb{C} \hspace{.2cm}| \hspace{.2mm} $  every sequence in $H$ has a subsequence which is bounded at $z$\}.
   \end{definition}

\begin{remark}\label{rem 4.13}
    As a result of the above definition, it follows that for a point $z$ to be in $K(H),$ every sequence in $H$ must be bounded at $z$. This, in particular, establishes that $K(H) \subseteq \cap_{h\in H}K(h)$.
\end{remark}
 
     It was shown in \cite{kumar2024results} that for two commuting functions $f$ and $g$ having no finite asymptotic values, the filled Julia set of the composite function $f\circ g$ contains the intersection of the filled Julia sets of the individual functions. This motivates the next result.
           
          \begin{proposition}\label{ prop 4.12}
      Suppose $H=[h_1,h_2,\dots]$ is an abelian transcendental semigroup. Then $\cap_{h\in H} K(h) \subseteq K(H) $.
          \end{proposition}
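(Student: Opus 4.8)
The plan is to work from the characterisation of $K(H)$ that the definition forces. A sequence of complex numbers fails to admit a bounded subsequence precisely when its modulus tends to $\infty$, so $z\in K(H)$ is equivalent to the statement that no sequence $\{g_k\}\subseteq H$ satisfies $g_k(z)\to\infty$, i.e. that the forward orbit $O_z:=\{g(z):g\in H\}$ is a bounded subset of $\C$. Thus, starting from $z\in\cap_{h\in H}K(h)$, the whole task reduces to proving that $O_z$ is bounded, and this is where abelianness must be used, since for a fixed $h$ we only know that the single orbit $\{h^n(z)\}$ is bounded, not the full two- or many-parameter orbit.

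The first step I would establish is an invariance lemma. If $g,h\in H$ commute and $w\in K(h)$, then $h^n(g(w))=g(h^n(w))$, and since $\{h^n(w)\}$ lies in a compact set its continuous image under $g$ is again bounded, so $g(w)\in K(h)$. Hence every $K(h)$ is forward invariant under the action of all of $H$, and consequently $\cap_{h\in H}K(h)$ is forward invariant; in particular $O_z\subseteq\cap_{h\in H}K(h)$, so every point of the orbit still has bounded orbit under each individual element of $H$.

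Next I would reduce an arbitrary sequence to orbits of single maps. Using commutativity, write each $g_k$ as $h_{i_1}^{a_1^{(k)}}\circ\cdots\circ h_{i_p}^{a_p^{(k)}}$ in terms of the finitely many generators it involves, and pass to a subsequence along which, for each generator, the exponent is either eventually constant or tends to $\infty$. The constant exponents combine into a fixed prefactor $\phi\in H$, which can be peeled off by continuity, so it suffices to bound $\{\,\prod_{i\in U}h_i^{a_i^{(k)}}(z)\,\}$, where $U$ indexes the generators whose exponents diverge. When $|U|\le 1$ this is immediate: if $U=\emptyset$ the sequence is eventually constant, and if $U=\{j\}$ the values $h_j^{a_j^{(k)}}(z)$ lie in the bounded orbit $\{h_j^n(z)\}$ guaranteed by $z\in K(h_j)$, so continuity of $\phi$ finishes this case.

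The main obstacle is the case $|U|\ge 2$, where several exponents diverge simultaneously at possibly different rates. My approach would be to order the divergent exponents and factor out a diagonal composition: with $U=\{1,2\}$ and $a_1^{(k)}\le a_2^{(k)}$ one writes $h_1^{a_1}h_2^{a_2}(z)=(h_1\circ h_2)^{a_1}\big(h_2^{a_2-a_1}(z)\big)$, where the inner points $h_2^{a_2-a_1}(z)$ stay in the bounded set $\{h_2^n(z)\}$ and, by the invariance lemma, lie in $K(h_1\circ h_2)$; the cited result $K(f)\cap K(g)\subseteq K(f\circ g)$ for commuting maps is exactly what legitimises this folding and is why it is flagged as motivation. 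The delicate residual point, and the crux of the whole argument, is that one is now applying the large iterate $(h_1\circ h_2)^{a_1^{(k)}}$ to a \emph{moving} base point lying in a fixed compact set, so one needs the $(h_1\circ h_2)$-iterates to be \emph{uniformly} bounded on that compact subset of $K(h_1\circ h_2)$. This uniformity does not follow from pointwise membership in a filled Julia set, which need not be closed, and closing it cleanly is where the commuting structure (which forces a common Julia set and hence a common Fatou partition) and, if necessary, a bounded-type or no--finite--asymptotic--value hypothesis as in the cited composition lemma must be brought to bear. I would therefore expect the final write-up either to invoke such a hypothesis or to confine the folding to the range of exponents for which the base point remains in a component of $F(H)$ on which the iterates stay bounded; combined with Remark \ref{rem 4.13} this then upgrades the inclusion to the equality $\cap_{h\in H}K(h)=K(H)$.
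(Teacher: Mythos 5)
Your proposal is not a complete proof, and to your credit you say so yourself: everything through the case $|U|\le 1$ is sound (the orbit-boundedness characterization of $K(H)$, the forward-invariance lemma for each $K(h)$ under all of $H$, the exponent-subsequence extraction), but the case $|U|\ge 2$ is the entire content of the proposition, and your folding argument stalls exactly where you flag it. Concretely: writing $h_1^{a_1}\circ h_2^{a_2}(z)=(h_1\circ h_2)^{a_1}\bigl(h_2^{a_2-a_1}(z)\bigr)$ places the base points in a bounded subset $S$ of $K(h_1\circ h_2)$, but pointwise membership of each $w\in S$ in a filled Julia set only gives a bound $\sup_m\lvert(h_1\circ h_2)^m(w)\rvert<\infty$ depending on $w$; nothing forces $\sup_{w\in S}\sup_m\lvert(h_1\circ h_2)^m(w)\rvert<\infty$, since $K(h_1\circ h_2)$ need not be closed and points of $K(h_1\circ h_2)\cap J(h_1\circ h_2)$ can have escaping points arbitrarily nearby. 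Since $z\in K(H)$ means the whole semigroup orbit $\{g(z):g\in H\}$ is bounded, some uniformity across $H$ must be produced, and your write-up produces none — it only gestures at extra hypotheses under which it might be. (A secondary issue: if $H$ is infinitely generated, a sequence $\{g_k\}$ may involve infinitely many distinct generators, so your "fixed prefactor" need not stabilize without a further diagonal argument.)

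The comparison with the paper is instructive, because the paper's proof takes a different and much shorter route and founders on the same rock, only silently. The paper writes any $h\in H$ as $h_1^{l_1}\circ\cdots\circ h_n^{l_n}$, invokes the composition result of \cite{kumar2024results} inductively, together with $K(g^n)=K(g)$, to obtain $K(h_1)\cap\cdots\cap K(h_n)\subseteq K(h)$ for every $h\in H$, and then simply concludes that $\cap_{h\in H}K(h)\subseteq K(H)$. Two things go wrong there, and your analysis exposes both. First, the cited result is stated for commuting functions with \emph{no finite asymptotic values}, a hypothesis the proposition does not carry — exactly the hypothesis you predicted would be needed. Second, and more fundamentally, knowing $z\in K(h)$ for every single $h\in H$ bounds each orbit $\{h^m(z)\}_m$ separately, with a constant depending on $h$; it does not bound the semigroup orbit $\{h(z):h\in H\}$, which is what membership in $K(H)$ requires by the definition (compare Remark \ref{rem 4.13}). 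The paper thus performs without comment the very jump from pointwise to uniform boundedness that you correctly refuse to make. In short: your route (invariance lemma, exponent subsequences, diagonal folding) is genuinely different from the paper's (induction on a two-function composition lemma), but neither argument closes the uniformity gap; yours at least names it.
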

          \begin{proof}
             As $H$ is abelian, therefore any $h\in H$ can be written as $h={h_1}^{l_1}\circ {h_2^{l_2}} \circ \dots \circ{h_n^{l_n}}$ for some choice of $h_i\in H$, and $l_i\in\mathbb{N}$,$ \hspace{.2cm}1\leq i\leq n$. Now $K(h)=K({h_1}^{l_1}\circ {h_2^{l_2}} \circ \dots \circ{h_n^{l_n}})$ and by above discussion, $K(h_1^{l_1}) \cap K(h_2^{l_2}) \cap \dots \cap K(h_n^{l_n})\subset K(h).$ Also, for any entire function $g, \hspace{.2cm}K(g^n)=K(g)$ for each $n\in \mathbb{N}$. Therefore, $K(h_1) \cap K(h_2) \cap\dots \cap K(h_n) \subseteq K(h)$. Thus, $K(H)$ contains all the finite intersections of $K(h), \hspace{.2cm} h\in H$. Hence, we conclude that $\cap_{h\in H} K(h) \subseteq K(H) $.  
          \end{proof}
             From the above proposition and remark \eqref{rem 4.13}, we conclude that the filled Julia set of a semigroup equals the intersection of the filled Julia sets of individual elements, that is, $K(H)=\cap_{h\in H} K(h)$. We illustrate this with an example. 
             \begin{example}
                 Let $f=e^{\lambda z}, \lambda\in \mathbb{C}\backslash\{0\}$ and $g=f^p+  \frac{2\pi i}{\lambda},\hspace{.2cm} p\in \mathbb{N}$. Let $H=[f,g]$. Then it can be seen that $g^n=f^{np}+ \frac{2\pi i}{\lambda}$ for $n\in \mathbb{N}$. It follows that, $K(f)=K(g)$. Also, for $l,m,n, q \in \mathbb{N}$, $f^l\circ g^m=f^{l+mp}$ and $g^n\circ f^q=f^{np+q}+\frac{2\pi i}{\lambda}$. Now, for any $h\in H$, either $h=f^s $ for some $s\in \mathbb{N}$ or $h=f^{kp}+\frac{2 \pi i}{\lambda}=g^k$ for some $k\in \mathbb{N}$. In both the cases, $K(h)=K(f)$ and hence $K(H)=K(f)$.
             \end{example}
Now, we recall the definition of the escaping set of a transcendental semigroup. For a transcendental semigroup $H=[h_1,h_2, \dots]$, its escaping set is denoted by I(H). It is defined as $I(H)=\{z\in \mathbb{C}\,|\,$ every sequence in $H$  diverges to infinity at z $\}$, \cite{kumar2016dynamics}.\\ As a result, $I(H)\subseteq \cap_{h\in H}I(h)$.  In the next result, we establish the reverse containment.
      \begin{theorem}\label{th 4.16}
          Let $H = [h_1,h_2, \dots]$ be an abelian transcendental semigroup. Then $\cap_{h\in H} I(h) \subset I(H)$.
      \end{theorem}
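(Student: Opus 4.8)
The plan is to mirror the structure of Proposition~\ref{ prop 4.12} (for $K(H)$), transferring it to the escaping set. The reverse containment $I(H) \subseteq \cap_{h\in H} I(h)$ is immediate, since every forward-iterate sequence $\{h^n\}$ lies in $H$, so the content is the stated inclusion. The engine will be the single-function composition estimate for escaping sets: for commuting transcendental entire functions $f$ and $g$ having no finite asymptotic values, $I(f) \cap I(g) \subseteq I(f\circ g)$, which is the escaping-set analogue of the filled-Julia-set containment invoked just before Proposition~\ref{ prop 4.12} (and parallels property (x) of Section~\ref{sec 3}). Combined with the elementary identity $I(g^n) = I(g)$ for every $n \in \mathbb{N}$, this is all the machinery I would need, and I would either cite or establish the composition estimate at the outset.

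First I would fix $z \in \cap_{h\in H} I(h)$ and an arbitrary element $h \in H$. Using that $H$ is abelian, I write $h = h_{1}^{l_1} \circ h_{2}^{l_2} \circ \cdots \circ h_{n}^{l_n}$ for suitable generators and exponents $l_i \in \mathbb{N}$. Applying the composition estimate inductively across the $n$ factors yields $I(h_1^{l_1}) \cap \cdots \cap I(h_n^{l_n}) \subseteq I(h)$, and then $I(g^n) = I(g)$ collapses this to $I(h_1) \cap \cdots \cap I(h_n) \subseteq I(h)$. Since $z \in \cap_{g\in H} I(g)$ in particular lies in this finite intersection, we obtain $z \in I(h)$; as $h \in H$ was arbitrary, $z$ escapes under the iterates of every element of $H$.

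The final and main step is to upgrade this to the sequential escaping condition that defines $I(H)$: I must show that every sequence $\{g_k\}$ drawn from $H$ satisfies $g_k(z) \to \infty$, not merely that each fixed element has escaping iterates. Here I would argue by contradiction using the partition $\mathbb{C} = I(H) \cup K(H) \cup BU(H)$ noted in the introduction. A sequence $\{g_k\}$ failing to diverge would, after passing to a subsequence, stay bounded at $z$, placing $z$ in $K(H) \cup BU(H)$ and hence, since each element's orbit through $z$ is already unbounded, forcing $z$ into $BU(H)$; the composition estimate, applied to the elements $g_k$ that realize the bounded subsequence, then contradicts $z \in \cap_{h\in H} I(h)$. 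I expect this passage from ``escaping for each fixed element'' to ``escaping along every sequence'' to be the crux, and the place where the abelian hypothesis, together with the tacit no-finite-asymptotic-value requirement on the generators already used in Proposition~\ref{ prop 4.12}, is genuinely needed to rule out a bounded subsequence.
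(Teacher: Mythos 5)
Your proposal has a genuine gap at exactly the step you yourself identify as the crux. First, note that the opening half of your argument is circular: the hypothesis $z\in\cap_{h\in H}I(h)$ is an intersection over \emph{all elements} of $H$, not merely over the generators, so ``$z\in I(h)$ for every $h\in H$'' is literally the hypothesis; the inductive use of the composition estimate $I(f)\cap I(g)\subseteq I(f\circ g)$ and the identity $I(g^n)=I(g)$ produces nothing that you did not already assume, and no asymptotic-value condition is needed for it. The real content, as you correctly say, is passing from ``$z$ escapes under the iterates of each fixed element'' to ``$g_k(z)\to\infty$ for every sequence $\{g_k\}$ in $H$''. Your contradiction argument correctly gets as far as $z\in BU(H)$ (ruling out $z\in K(H)$ via the iterate sequence $\{h^n\}$ of a fixed element), but the final blow --- ``the composition estimate, applied to the elements $g_k$ that realize the bounded subsequence'' --- does not work. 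That estimate concerns the long-run iteration orbit of a fixed composite map, whereas the bounded subsequence consists of single applications $g_k(z)$ of infinitely many \emph{distinct} elements; knowing $z\in I(g_k)$ for each fixed $k$ (i.e.\ $g_k^n(z)\to\infty$ as $n\to\infty$) places no bound at all on the first values $g_k(z)$, so their boundedness yields no contradiction.

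The missing ingredient is precisely Theorem \eqref{th4.5} of the paper, $BU(H)\subseteq\cup_{h\in H}BU(h)$, which you never invoke: with it, $z\in BU(H)$ gives $z\in BU(h)$ for a single element $h$, contradicting $z\in I(h)$ since $BU(h)\cap I(h)=\emptyset$. The paper's own proof is in fact shorter and avoids sequences and contradiction entirely: from $z_0\in I(h)$ for all $h$ one gets $z_0\notin BU(h)\cup K(h)$ for all $h$; then $z_0\notin\cup_{h\in H}BU(h)=BU(H)$ by Theorem \eqref{th4.5}, and $z_0\notin\cap_{h\in H}K(h)\supseteq K(H)$ by Remark \eqref{rem 4.13}; the partition $\mathbb{C}=I(H)\cup K(H)\cup BU(H)$ then forces $z_0\in I(H)$. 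Note also that your diagnosis of where the hypotheses act is off: in this argument the abelian and no-finite-asymptotic-value assumptions are not what rules out the bounded subsequence --- the heavy lifting is done by the union identity for $BU$, while the abelian hypothesis enters only through the citation of Proposition \eqref{ prop 4.12} (and even there, the inclusion direction actually used is the general Remark \eqref{rem 4.13}).
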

      \begin{proof}
          Suppose $z_0\in \cap_{h\in H} I(h)$. Then, $z_0\in I(h)$ for all $h\in H$, which further implies that $z_0 \notin BU(h)\cup K(h)$. If $z_0\notin BU(h)$ for any $h\in H$, then by using Theorem \eqref{th4.5}, $z_0 \notin \cup_{h\in H} BU(h)=BU(H)$. Also, if $z_0\notin K(h)$ for all $h\in H$ then, $z_0\notin \cap K(h)=K(H)$ by Proposition \eqref{ prop 4.12}. Combining both the situations, we obtain $z_0\notin BU(H)\cup K(H)$. Hence, $z_0 \in I(H)$.
    
      % Suppose $z_0  \notin I(H)$. This implies that $z_0 \in BU(H) \cup K(H)$. If $z_0 \in BU(H),$ then $z_0 \in \cup _{h\in H}BU(h)$ and hence, $z_0 \in BU(h)$ for some $h\in H$. As a result, $z_0  \notin I(h)$ and hence, $z_0 \notin \cap I(h)$ . Also, if 
      % $z_0 \in K(H),$ then $z_0 \in \cap K(h)$ and hence, $z_0 \in K(h)$ for all $h\in H.$ Thus, $z_0\notin I(h)$ and hence, $z_0 \notin \cap I(h)$. Thus, in both situations, we obtain that if $z_0\notin I(H)$ then $z_0\notin \cap_{h\in H}\, I(h)$. Hence, $\cap_{h\in H} I(h) \subset I(H)$. \\
      Thus, we conclude that for an abelian transcendental semigroup $H$, $I(H)= \cap_{h\in H}I(h)$
        \end{proof}
       
         The next results follow easily from the definitions of the bungee set, the escaping set, and the filled Julia set of a semigroup.
\begin{proposition} \label{3.14}
     Given a transcendental semigroup $H=[h_1,h_2, \dots]$, $BU(H)\cap I(H) = \emptyset$.
\end{proposition}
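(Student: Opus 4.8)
The plan is to prove $BU(H) \cap I(H) = \emptyset$ directly from the definitions, showing that the two defining conditions are logically incompatible. The key observation is that membership in $I(H)$ requires \emph{every} sequence in $H$ to diverge to infinity at the point, whereas membership in $BU(H)$ requires the \emph{existence} of at least one sequence that stays bounded. These cannot coexist.

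First I would argue by contradiction: suppose there exists $z_0 \in BU(H) \cap I(H)$. From $z_0 \in BU(H)$, by Definition \eqref{Def 3.1}, there exist two subsequences $\{h_{m_k}\}$ and $\{h_{n_k}\}$ in $H$ together with a constant $R>0$ such that $|h_{m_k}(z_0)| < R$ for all $k$, while $|h_{n_k}(z_0)| \to \infty$. In particular, the subsequence $\{h_{m_k}\}$ is a sequence in $H$ that remains bounded at $z_0$ (it does not diverge to infinity). On the other hand, since $z_0 \in I(H)$, by the definition of the escaping set of $H$, every sequence in $H$ must diverge to infinity at $z_0$. Applying this to the bounded subsequence $\{h_{m_k}\}$ yields $|h_{m_k}(z_0)| \to \infty$, which directly contradicts $|h_{m_k}(z_0)| < R$ for all $k$.

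This contradiction shows no such $z_0$ can exist, and hence $BU(H) \cap I(H) = \emptyset$. I expect this proof to be short and essentially immediate, as the paper itself flags with the remark that ``the next results follow easily from the definitions.'' There is no substantial obstacle here; the only point requiring care is to correctly interpret the definition of $I(H)$ as a universal condition over all sequences (so that it governs the particular bounded subsequence arising from the bungee condition) and to note that a subsequence of elements of $H$ is itself a legitimate sequence in $H$. An alternative but equivalent route would be to use the partition perspective suggested earlier in the paper, writing $BU(H)$, $K(H)$, and $I(H)$ as disjoint by construction, but the direct argument above is the cleanest and relies only on the stated definitions.
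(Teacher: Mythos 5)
Your proof is correct and matches the paper's approach: the paper offers no written argument beyond remarking that the result ``follows easily from the definitions,'' and your argument is precisely that immediate definitional incompatibility, spelled out. The one point of care you flag --- that the bounded subsequence $\{h_{m_k}\}$ is itself a legitimate sequence in $H$ to which the universal condition defining $I(H)$ applies --- is exactly the right detail to make explicit.
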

 \begin{proposition}\label{3.15}
      For a transcendental semigroup $H=[h_1,h_2, \dots]$, $BU(H)\cap K(H) = \emptyset$.
 \end{proposition}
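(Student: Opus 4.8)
The plan is to prove Proposition \ref{3.15}, namely that $BU(H) \cap K(H) = \emptyset$, directly from the definitions of the bungee set and the filled Julia set of the transcendental semigroup $H$. The key observation is that the two sets impose incompatible conditions on the escaping behaviour of sequences in $H$ at a point $z$.

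First I would suppose, for contradiction, that some point $z_0$ lies in both $BU(H)$ and $K(H)$. Since $z_0 \in K(H)$, the definition of the filled Julia set tells us that \emph{every} sequence in $H$ has a subsequence which is bounded at $z_0$; in fact, by Remark \eqref{rem 4.13}, the stronger statement holds that every sequence in $H$ must itself be bounded at $z_0$, so no sequence in $H$ can have a subsequence tending to infinity at $z_0$. On the other hand, since $z_0 \in BU(H)$, Definition \eqref{Def 3.1} guarantees the existence of a subsequence $\{h_{n_k}\}$ in $H$ with $|h_{n_k}(z_0)| \to \infty$ as $k \to \infty$. This directly contradicts the boundedness forced by membership in $K(H)$, and hence the intersection is empty.

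I expect the argument to be essentially immediate once the definitions are unwound, so there is no serious obstacle; the only point requiring a little care is making precise the sense in which $K(H)$ forbids an escaping subsequence. The cleanest route is to invoke Remark \eqref{rem 4.13}, which establishes $K(H) \subseteq \cap_{h\in H} K(h)$ and, more to the point, records that $z_0 \in K(H)$ means every sequence in $H$ is bounded at $z_0$. An alternative, more structural proof would use the partition of $\mathbb{C}$ into $BU(H)$, $K(H)$, and $I(H)$ mentioned in the introduction: if one takes $BU(H) = \mathbb{C} \setminus (K(H) \cup I(H))$ as the working definition (paralleling the single-function identity $BU(f) = \mathbb{C}\setminus(K(f)\cup I(f))$ used earlier in the excerpt), then disjointness from $K(H)$ is built in by construction. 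I would present the direct definitional contradiction as the main proof, since it matches the style of the companion Proposition \eqref{3.14}.
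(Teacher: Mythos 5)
Your proof is correct and takes essentially the same route the paper intends: the paper offers no explicit argument for Proposition \ref{3.15}, stating only that it "follows easily from the definitions," and your unwinding of Definition \ref{Def 3.1} against the definition of $K(H)$ (via Remark \ref{rem 4.13}) is exactly that easy definitional argument. Indeed, the contradiction is immediate even without the Remark, since the escaping subsequence $\{h_{n_k}\}$ furnished by $z_0\in BU(H)$ is itself a sequence in $H$ with no bounded subsequence at $z_0$, violating the definition of $K(H)$ directly.
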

 \begin{proposition}\label{3.16}
      Given a transcendental semigroup $H=[h_1,h_2, \dots]$, $I(H)\cap K(H) = \emptyset$.
 \end{proposition}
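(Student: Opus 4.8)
The plan is to argue by contradiction directly from the two definitions, which are in fact logically incompatible at any single point. Suppose, toward a contradiction, that there exists a point $z_0\in I(H)\cap K(H)$. Since $H$ is a transcendental semigroup, it contains infinitely many distinct elements (for instance $h_1,h_1^2,h_1^3,\dots$ are pairwise distinct), so there is at least one genuine sequence $\{g_n\}$ of elements of $H$ with $n\to\infty$ on which both membership conditions can be tested.

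Next I would invoke the membership $z_0\in I(H)$. By the definition of the escaping set of a semigroup, every sequence in $H$ diverges to infinity at $z_0$; applying this to the sequence $\{g_n\}$ yields $|g_n(z_0)|\to\infty$ as $n\to\infty$. A standard fact about divergence to $\infty$ is that then every subsequence of $\{g_n(z_0)\}$ also tends to $\infty$, so no subsequence of $\{g_n\}$ can remain bounded at $z_0$.

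Now I would bring in the membership $z_0\in K(H)$. By the definition of the filled Julia set of a semigroup, every sequence in $H$ has a subsequence that is bounded at $z_0$; applied to the same sequence $\{g_n\}$, this produces a subsequence $\{g_{n_k}\}$ together with a constant $R>0$ such that $|g_{n_k}(z_0)|<R$ for all $k$. This directly contradicts the conclusion of the previous step, namely that $|g_{n_k}(z_0)|\to\infty$. Hence no such $z_0$ can exist, and therefore $I(H)\cap K(H)=\emptyset$.

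I do not anticipate a genuine obstacle here: the statement is an immediate consequence of the mutually exclusive quantifier structure of the two definitions (\emph{every} sequence escapes to infinity versus \emph{every} sequence admits a bounded subsequence). The only point deserving a word of care is the nonemptiness of the index set, that is, ensuring that $H$ really carries an infinite sequence on which to evaluate both conditions simultaneously; this is automatic because each generator is transcendental, so its iterates already supply infinitely many distinct elements of $H$.
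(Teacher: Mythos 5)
Your proof is correct and follows exactly the route the paper intends: the paper offers no written argument for this proposition, stating only that it ``follows easily from the definitions,'' and your contradiction argument is precisely that definitional incompatibility (every sequence escaping at $z_0$ versus every sequence admitting a bounded subsequence at $z_0$) spelled out carefully. Your extra remark about the existence of an infinite sequence of distinct elements in $H$ is a sound, if minor, point of rigor that the paper leaves implicit.
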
 
 Hence, for a transcendental semigroup $H$, the above results establish that the complex plane can be expressed as the disjoint union of $BU(H), K(H)$, and $I(H)$.\\
  We now recall the notion of a completely invariant set of a transcendental semigroup $H$ \cite {kumar2016dynamics}. A set $P\subset \mathbb{C}$ is said to be forward invariant under $H$, if $h(P) \subseteq P$ for every $h\in H$ and $P$ is backwards invariant under $H$, if $h^{-1}(P)=\{z \in  \mathbb{C}:h(z) \in P\} \subseteq P$ for every $h \in H$. If $P$ is both forward and backwards invariant, then it is called completely invariant under $H$.
         
 With the help of the partition obtained by the above propositions \eqref{3.14}-\eqref{3.16}, we next discuss the invariance properties of the filled Julia set, as described in the theorem below.
     \begin{theorem}\label{Th 4.15}
         
      Suppose $H=[h_1,h_2,\dots]$ is an abelian transcendental semigroup. Then the filled Julia set $K(H)$ is completely invariant.
           \end{theorem}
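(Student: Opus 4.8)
The plan is to exploit the partition $\mathbb{C} = BU(H)\cup K(H)\cup I(H)$ into pairwise disjoint pieces coming from Propositions \eqref{3.14}--\eqref{3.16}, together with the abelian identities $BU(H)=\cup_{h\in H}BU(h)$, $K(H)=\cap_{h\in H}K(h)$ and $I(H)=\cap_{h\in H}I(h)$ furnished by Theorem \eqref{th4.5}, Proposition \eqref{ prop 4.12} and Theorem \eqref{th 4.16}. I would prove forward invariance of $K(H)$ by a direct computation from the definition, establish forward invariance of $BU(H)$ and $I(H)$ as well, and then read off backward invariance of $K(H)$ from the partition alone.

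First, I would prove forward invariance of $K(H)$. Fix $g\in H$ and $z_0\in K(H)$, and let $\{f_k\}$ be an arbitrary sequence in $H$. Since $H$ is closed under composition, $\{f_k\circ g\}$ is again a sequence in $H$, and as $z_0\in K(H)$ it has a subsequence bounded at $z_0$; that is, $\{f_{k_j}(g(z_0))\}$ is bounded. Hence $\{f_k\}$ has a subsequence bounded at $g(z_0)$, so $g(z_0)\in K(H)$. The same composition trick handles $I(H)$: if every sequence diverges to $\infty$ at $z_0$, then applying this to $\{f_k\circ g\}$ forces $f_k(g(z_0))\to\infty$, so $I(H)$ is forward invariant.

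Next, I would treat forward invariance of $BU(H)$. Using $BU(H)=\cup_{h\in H}BU(h)$, choose $h_0$ with $z_0\in BU(h_0)$, so the orbit $\{h_0^n(z_0)\}$ has a bounded subsequence $h_0^{m_j}(z_0)$ and an escaping subsequence $h_0^{l_j}(z_0)\to\infty$. By commutativity $h_0^n\circ g = g\circ h_0^n$, so $h_0^{m_j}(g(z_0))=g\bigl(h_0^{m_j}(z_0)\bigr)$ stays bounded, since an entire map sends bounded sets to bounded sets; this exhibits a sequence in $H$ that does not diverge at $g(z_0)$, whence $g(z_0)\notin I(H)$. It then remains to show $g(z_0)\notin K(H)$, for which one must produce a sequence in $H$ that is unbounded at $g(z_0)$; the natural candidate is $h_0^{l_j}(g(z_0))=g\bigl(h_0^{l_j}(z_0)\bigr)$, and the point to verify is that this escaping subsequence survives application of $g$. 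Granting this, $g(z_0)\in BU(H)$. Finally, backward invariance of $K(H)$ follows from the partition: if $g(z_0)\in K(H)$, then $z_0$ lies in exactly one of $BU(H),K(H),I(H)$; were $z_0\in BU(H)$ then forward invariance would give $g(z_0)\in BU(H)$, contradicting Proposition \eqref{3.15}, and were $z_0\in I(H)$ then $g(z_0)\in I(H)$, contradicting Proposition \eqref{3.16}. Hence $z_0\in K(H)$, so $g^{-1}(K(H))\subseteq K(H)$, and with the forward direction $K(H)$ is completely invariant.

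The hard part will be the escaping direction in the forward invariance of $BU(H)$. Whereas $g$ automatically preserves the bounded part of an oscillating orbit (continuity on compacta), it could a priori map the escaping subsequence $h_0^{l_j}(z_0)\to\infty$ to a bounded sequence if $g$ carries a finite asymptotic value that is approached along it; ensuring instead that $g(z_0)$ cannot fall into $K(H)$ is the one genuinely delicate step. This is precisely where the commutativity of the generators must be used, and it is the place where one either invokes the structural rigidity of permutable entire functions or appeals to the complete invariance of $BU(H)$ itself.
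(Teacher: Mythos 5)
Your forward-invariance arguments for $K(H)$ and $I(H)$ are correct, and in fact cleaner than required: the composition trick $(f_k\circ g)(z_0)=f_k(g(z_0))$ uses only that $H$ is a semigroup, not that it is abelian. The problem is everything downstream. Your route to backward invariance of $K(H)$ runs through forward invariance of $BU(H)$, and the step you yourself flag as delicate is a genuine gap, not a technicality. Commutativity gives $h_0^{l_j}(g(z_0))=g\bigl(h_0^{l_j}(z_0)\bigr)$, but an entire function $g$ can carry a sequence tending to infinity to a bounded sequence (exactly when $g$ has a finite asymptotic value approached along that sequence), so nothing you have written rules out $g(z_0)\in K(H)$. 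This is not sloppy bookkeeping on your part: the corresponding statement for commuting functions quoted in Section \ref{sec 3} (item (x)) carries the hypothesis that the functions have \emph{no finite asymptotic values} precisely to kill this obstruction, and Theorem \eqref{Th 4.15} has no such hypothesis. Neither of your suggested escape routes repairs this: ``structural rigidity of permutable entire functions'' is not a citable statement that yields the escaping direction, and appealing to ``the complete invariance of $BU(H)$ itself'' is circular, because in this paper the complete invariance of $BU(H)$ for abelian semigroups is \emph{deduced from} Theorem \eqref{Th 4.15} together with the complete invariance of $I(H)$, via $BU(H)=\mathbb{C}\setminus(I(H)\cup K(H))$.

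The paper avoids this trap by never touching $BU(H)$: both directions are argued directly from the definition of $K(H)$, which involves only boundedness, so the asymptotic-value obstruction never arises. Its forward direction is essentially yours, written with commutativity ($h(g(z_0))=g(h(z_0))$, and $g$ maps the bounded orbit of $z_0$ to a bounded set); for the backward direction it argues that if $h(z_0)\in K(H)$, then every element of the form $g\circ h$ is bounded at $z_0$, and concludes from the semigroup structure that the whole orbit of $z_0$ is bounded, rather than trying to exclude $z_0$ from $BU(H)$ and $I(H)$ by a partition argument. If you want to salvage your proposal, replace the partition argument for backward invariance by such a direct boundedness argument; as it stands, your proof establishes only forward invariance of $K(H)$ and $I(H)$.
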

           \begin{proof}
               Suppose $z_0\in K(H)$. Then there exist some $M>0$ such that $|h(z_0)| < M \hspace{.2cm} \forall \hspace{.2cm} h\in H$. For each $g\in H, \hspace{.2cm}|h \circ g(z_0)|= |g(h(z_0))|$ is bounded. Then, there exist some $R>0$ such that $|h(g(z_0))|<R $. This shows $g(z_0) \in K(H)$. Hence, $K(H)$ is forward invariant under $H$. For backwards invariance, suppose $z_0 \in h^{-1}(K(H)) $, that is, $ h(z_0) \in K(H) \hspace{.2cm} \forall \hspace{.2cm} h \in H $. Then $|g\circ h(z_0)|<K \hspace{.2cm} \forall \hspace{.2cm}g\in H$ and for some $K>0$. It follows that $  z_0\in K(H)$. Hence, $K(H)$ is completely invariant under $H$.
               \end{proof}
               
By the complete invariance of the filled Julia set and the escaping set \cite[Theorem 2.4]{kumar2014escaping} of an abelian transcendental semigroup $H$, we now obtain the complete invariance of the bungee set of an abelian transcendental semigroup. This is because $BU(H)=\mathbb{C}\setminus (I(H)\cup K(H))$.\\

\section{IMPORTANT OBSERVATIONS}\label{sec 5}
In this section, we focus on various properties of a transcendental semigroup, delivering key theorems, propositions, and results that highlight its dynamical behaviour.
The following result provides a condition under which the bungee set of a transcendental entire function $f$ is contained in its Julia set.
\begin{lemma} \label{lem 3.9}
Suppose $f$ is a transcendental entire function having no oscillatory wandering domain, then $BU(f)\subset J(f)$.
\end{lemma}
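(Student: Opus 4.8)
The plan is to argue by contradiction, leveraging the dichotomy that every Fatou component meeting $BU(f)$ must be a wandering domain contained in $BU(f)$ (property (v), i.e.\ \cite[Theorem 1.1]{osborne2015set}). Suppose, contrary to the claim, that $BU(f) \not\subset J(f)$. Then there exists a point $z_0 \in BU(f) \cap F(f)$. Let $U$ be the Fatou component of $f$ containing $z_0$. Since $U \cap BU(f) \neq \emptyset$, property (v) immediately yields that $U \subseteq BU(f)$ and that $U$ is a wandering domain of $f$.

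The heart of the argument is then to show that this $U$ is in fact an \emph{oscillatory} wandering domain, which contradicts the hypothesis. First I would recall that on a wandering domain of a transcendental entire function, every limit function of the iterate sequence $\{f^n\}$ is a constant (this is a standard feature of wandering domains, as a non-constant limit function forces periodicity). Next I would extract the two subsequences guaranteed by membership in $BU(f)$: since every $z \in U \subseteq BU(f)$ has an orbit that is unbounded, there is a subsequence $f^{n_k}(z) \to \infty$, and since the orbit does not escape to infinity, there is a subsequence $f^{m_k}(z)$ that stays bounded. By normality on $U$, after passing to further subsequences these converge locally uniformly to constant limit functions, namely $\infty$ and some finite constant $b \in \mathbb{C}$, respectively. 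Hence $\{\infty, b\}$ appears among the limit functions of $\{f^n\}$ on $U$, which is precisely the defining condition for $U$ to be an oscillatory wandering domain.

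This produces an oscillatory wandering domain of $f$, contradicting the standing hypothesis that $f$ has none. Therefore no point of $BU(f)$ can lie in $F(f)$, and we conclude $BU(f) \subset J(f)$.

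I expect the main obstacle to be the middle step: carefully justifying that the limit functions on $U$ are constants and that the two required subsequences (one bounded, one escaping) can be promoted via normality to the constant limit functions $b$ and $\infty$ uniformly on $U$, rather than merely pointwise at a single $z_0$. The pointwise statement follows directly from $z_0 \in BU(f)$, but upgrading to limit functions of $\{f^n\}$ on the whole component requires invoking normality together with the constancy of limit functions on wandering domains; the rest of the proof is formal.
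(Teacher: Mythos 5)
Your proof is correct and takes essentially the same route as the paper's: the paper's (one-line) proof simply asserts that the absence of oscillatory wandering domains forces $BU(f)\cap F(f)=\emptyset$, and the contradiction argument you give—pass from a point of $BU(f)\cap F(f)$ to its whole Fatou component via \cite[Theorem 1.1]{osborne2015set}, then recognize that component as an oscillatory wandering domain—is exactly the reasoning the paper leaves implicit. Your extra middle step (constancy of limit functions on wandering domains, upgrading the two pointwise subsequences to constant limit functions $b$ and $\infty$) is a sound and welcome filling-in of the ``that is'' clause in the paper's definition of an oscillatory wandering domain, not a departure from its approach.
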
   
\begin{proof}
    As $F(f)$ has no oscillatory wandering domain, therefore, $BU(f)\cap F(f)=\emptyset$, which implies the result.
\end{proof}
We extend the above result to a semigroup.
\begin{theorem} \label{Th 5.2}

Let $H =[ h_1,h_2,\dots] $ be a transcendental semigroup having no oscillatory wandering domain. Then $ BU(H) \subset J(H).$
    % only for the function of class $\mathcal {B}$.
\end{theorem}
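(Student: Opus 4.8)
The plan is to prove the equivalent statement $BU(H)\cap F(H)=\emptyset$, since $J(H)=\mathbb{C}\setminus F(H)$. I would argue by contradiction: assume there is a point $z_0\in BU(H)\cap F(H)$ and produce from it an oscillatory wandering domain of $H$, contradicting the hypothesis. This is the semigroup analogue of Lemma \eqref{lem 3.9}, whose content is precisely that $F(f)$ carries no oscillatory wandering domain of a single function.

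First I would let $U$ be the component of $F(H)$ containing $z_0$. Since $z_0\in U\cap BU(H)$, part (a) of Theorem \eqref{Th4.5} applies and gives at once that $U\subset BU(H)$ and that $U$ is a wandering domain of $H$. These are two of the three defining conditions for $U$ to be an oscillatory wandering domain of $H$. To obtain the third (the limit-function condition) I would fix $z\in U$ and use the definition of $BU(H)$ to get a subsequence $\{h_{m_k}\}\subset H$ with $|h_{m_k}(z)|<R$ and a subsequence $\{h_{n_k}\}\subset H$ with $|h_{n_k}(z)|\to\infty$. By normality of $H$ on $U$, after passing to further subsequences both converge locally uniformly on $U$; the escaping subsequence can only have the constant limit function $\infty$, while the bounded subsequence, whose values stay in $\{|w|<R\}$, subconverges to a finite constant $b$. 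Hence both $\infty$ and $b$ occur as limit functions of sequences from $H$ on $U$, so $U$ is an oscillatory wandering domain of $H$, which is the desired contradiction.

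The step I expect to be the main obstacle is this last one, namely reconciling the pointwise information coming from membership in $BU(H)$ (for each $z$ there are a bounded and an escaping subsequence) with the definition of an oscillatory wandering domain of $H$, which asks for $\{\infty,b\}$ to appear as limit functions on the whole component $U$. The key facts I would invoke are that on a wandering Fatou component the locally uniform limits of a normal family are constant, and that a subsequence with values confined to a bounded disc cannot degenerate to $\infty$; together these promote the pointwise subsequences to genuine limit functions $\infty$ and $b$ on $U$. An alternative, shorter route uses Theorem \eqref{th4.5} to write $z_0\in BU(h)$ for some $h\in H$, notes $z_0\in F(H)\subseteq F(h)$, and applies Lemma \eqref{lem 3.9} to exhibit an oscillatory wandering domain of $h$, which then localizes to an oscillatory wandering domain of $H$ via part (a) of Theorem \eqref{Th4.5}.
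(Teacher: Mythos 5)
Your proposal follows essentially the same route as the paper's proof: argue by contradiction, take a Fatou component $U$ meeting $BU(H)$, and apply part (a) of Theorem \eqref{Th4.5} to conclude that $U$ is a wandering domain contained in $BU(H)$, contradicting the hypothesis that $H$ has no oscillatory wandering domain. The only difference is that you also verify the limit-function condition in the definition of an oscillatory wandering domain (via normality of $H$ on $U$, promoting the pointwise bounded and escaping subsequences to constant limit functions $b$ and $\infty$), a step the paper's proof silently omits by treating a wandering domain inside $BU(H)$ as automatically oscillatory.
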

\begin{proof}
    % For the function of class $\mathcal{B}$, the Fatou component of $H$ does not have any wandering domain that tends to infinity, and this wandering domain does not have any subsequence that may tend to infinity. This shows that the bungee set does not intersect with the Fatou set. This implies $BU(H)\subset J(H)$.                  dhggjhjbhjtftyhjh

 % Let $H=[h_1, h_2, \dots, h_n]$ be a finitely generated abelian transcendental semigroup. Suppose $h_i \in H, 1 \leq i \leq n$   have no oscillatory wandering domains. Let $h\in H$, where $h=h_1^{l_1}\circ h_2^{l_2} \dots\circ h_n^{l_n}$ for all  $l_i\in \mathbb{N}$, $1 \leq i \leq n$. Assume $U\subset F(h)$ has an oscillatory wandering domain. Then, $U \subset F(h_1^{l_1}\circ h_2^{l_2} \dots\circ h_n^{l_n}) \subset F(h_1^{l_1})\cap F(h_2^{l_2})\cap \dots \cap F(h_n^{l_n})$ implies $U \subset F(h_i^{l_i})$ equals $U \subset F(h_i), \forall \hspace{.2cm} h_i  \in H$ and $1\leq i\leq n$. This implies that $U\subset F(h)$ has an oscillatory wandering domain, which is a contradiction. Therefore, $U\subset F(h)$ has no oscillatory wandering domain and hence the result.
 Assume this is not the case, that is, $BU(H)\cap F(H)\neq \emptyset$. As a result, there is a Fatou component $U$ such that $ U \cap BU(H) \neq \emptyset$. By Theorem \eqref{Th4.5}, $U\subset F(H)$ is a wandering domain and $U\subset BU(H)$. But, by hypothesis, $H$ has no oscillatory wandering domain. This leads to a contradiction and hence, the result.
\end{proof}
Observe that the closure of a completely invariant set is also completely invariant. We now use the minimality property of the Julia set to obtain the following result. Its proof is immediate.
\begin{lemma}
    Suppose $f$ is a transcendental entire function. Then, $J(f)\subset\overline{BU(f)}$. \label{Lemma 4.18}
\end{lemma}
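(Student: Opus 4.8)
The plan is to deduce the inclusion from the \emph{minimality property} of the Julia set: for a transcendental entire function, $J(f)$ is the smallest closed, completely invariant set containing at least three points. Accordingly, it suffices to exhibit a closed, completely invariant set with at least three points that is itself contained in $\overline{BU(f)}$, and the natural candidate is $\overline{BU(f)}$. So the whole argument reduces to checking that $\overline{BU(f)}$ meets the hypotheses of the minimality statement.

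I would carry this out in three short steps. First, recall from the first theorem of Section \ref{sec 3} that $BU(f)$ is completely invariant under $f$. Second, invoke the observation recorded immediately before the statement, namely that the closure of a completely invariant set is again completely invariant; hence $\overline{BU(f)}$ is a closed, completely invariant set. Third, verify the cardinality requirement: since $BU(f)\neq\emptyset$ and $BU(f)$ is backward invariant, the full backward orbit of any point of $BU(f)$ stays inside $BU(f)$, and because $f$ is transcendental such a backward orbit is infinite (the only possible obstruction, a Picard exceptional value, would have to be a fixed point and therefore lie in $K(f)$ rather than $BU(f)$). Thus $\overline{BU(f)}$ contains infinitely many points, and applying minimality of $J(f)$ gives $J(f)\subseteq\overline{BU(f)}$.

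Since the conclusion is a formal consequence of minimality, I do not expect a genuine obstacle; the only point requiring a moment's care is confirming the ``at least three points'' hypothesis, handled as above. I would also note an even quicker route that bypasses minimality entirely: by property (vi) of Section \ref{sec 3} one has $J(f)=\partial BU(f)$, and since $\partial BU(f)\subseteq\overline{BU(f)}$ always holds, the inclusion $J(f)\subseteq\overline{BU(f)}$ follows at once. This alternative explains the author's remark that the proof is immediate.
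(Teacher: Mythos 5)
Your proposal is correct and is essentially the paper's own proof: the paper likewise deduces the lemma by applying the minimality of the Julia set to the closed, completely invariant set $\overline{BU(f)}$ (and your alternative route via $J(f)=\partial BU(f)\subseteq\overline{BU(f)}$ is also valid). The only blemish is your parenthetical claim that a Picard exceptional value must be a fixed point --- this is false in general (e.g.\ $0$ is omitted by $e^{z}$ but is not fixed) --- yet the ``at least three points'' hypothesis is immediate anyway, since the forward orbit of any point of $BU(f)$ is unbounded, hence infinite, and lies in $BU(f)$ by forward invariance.
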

Combining Lemma \eqref{lem 3.9} and Lemma \eqref{Lemma 4.18}, the following result is evident for a transcendental entire function $f$.
\begin{proposition} \label{pro 3.20}
    Suppose $f$ is a transcendental entire function having no oscillatory wandering domain. Then, $J(f)=\overline{BU(f)}$.\label{Prop 4.17}
\end{proposition}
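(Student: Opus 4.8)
The plan is to establish the set equality by proving two inclusions, both of which have essentially been prepared by the two lemmas cited. The statement in question, Proposition \eqref{pro 3.20}, asserts that $J(f)=\overline{BU(f)}$ for a transcendental entire function $f$ having no oscillatory wandering domain. The natural strategy is to combine Lemma \eqref{lem 3.9} and Lemma \eqref{Lemma 4.18} directly, since one gives the forward inclusion after taking closures and the other gives the reverse inclusion.

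First I would prove the inclusion $J(f)\subset\overline{BU(f)}$. This is precisely the content of Lemma \eqref{Lemma 4.18}, which holds for any transcendental entire function (no hypothesis on wandering domains is needed here). Thus I would simply invoke that lemma. For the reverse inclusion $\overline{BU(f)}\subset J(f)$, I would start from Lemma \eqref{lem 3.9}, which under the no-oscillatory-wandering-domain hypothesis yields $BU(f)\subset J(f)$. Since the Julia set $J(f)$ is closed, taking the closure of both sides preserves the inclusion and gives $\overline{BU(f)}\subset\overline{J(f)}=J(f)$. Combining the two inclusions then yields the desired equality.

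The main subtlety, and the only place where the hypothesis is genuinely used, lies in the reverse inclusion via Lemma \eqref{lem 3.9}: the containment $BU(f)\subset J(f)$ fails in general and relies on the absence of oscillatory wandering domains, which forces $BU(f)\cap F(f)=\emptyset$. The forward inclusion $J(f)\subset\overline{BU(f)}$ is unconditional and follows from the minimality property of the Julia set (the Julia set is contained in the closure of any nonempty, closed, completely invariant set, and $BU(f)$ is completely invariant with $J(f)=\partial BU(f)$ by the single-function analogue of Theorem \eqref{Th4.5}). Since both building blocks are already available as stated lemmas in the excerpt, the proof is a short two-line deduction with no real obstacle; the work has effectively been front-loaded into Lemma \eqref{lem 3.9} and Lemma \eqref{Lemma 4.18}.
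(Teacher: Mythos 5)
Your proof is correct and follows exactly the paper's route: the paper likewise obtains the result by combining Lemma \eqref{lem 3.9} (which gives $BU(f)\subset J(f)$ under the no-oscillatory-wandering-domain hypothesis, hence $\overline{BU(f)}\subset J(f)$ since $J(f)$ is closed) with Lemma \eqref{Lemma 4.18} (which gives $J(f)\subset\overline{BU(f)}$ unconditionally). Your write-up in fact makes explicit the closure-taking step that the paper leaves implicit.
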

  For a rational semigroup $G$, $J(G)$ is the smallest closed backwards invariant set containing at least three points \cite[Corollary 3.2]{hinkkanen1996dynamics}. We now proceed to show that a similar result also holds for a transcendental semigroup $H$. 
  
% The next result is motivated by \cite{beardon2000iteration}.

\begin{theorem} \label{Th 3.18}
     For a transcendental semigroup $H$, let $E$ be a closed, completely invariant set containing at least three points. Then $E\supset J(H)$.
\end{theorem}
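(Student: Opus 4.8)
The plan is to reduce this semigroup statement to the classical minimality property of the Julia set of a single transcendental entire function, and then reassemble the pieces using the description $J(H)=\overline{\cup_{h\in H}J(h)}$ established in \cite[Theorem 4.2]{poon1998fatou}. In effect, the semigroup Julia set is a closure of single-function Julia sets, and each of those is known to be minimal among closed backwards invariant sets with enough points, so a set $E$ that is robust enough to contain each $J(h)$ will automatically contain $J(H)$.

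First I would unpack the hypothesis on $E$. Since $E$ is completely invariant under $H$, we have $h^{-1}(E)\subseteq E$ for every $h\in H$; that is, $E$ is closed and backwards invariant under each individual function $h\in H$, and it still contains at least three points. Thus $E$ satisfies, relative to every single $h\in H$, exactly the hypotheses needed to invoke the one-function minimality of the Julia set.

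Next I would apply that one-function result: if $E$ is closed, backwards invariant under a transcendental entire function $h$, and contains at least three points, then $J(h)\subseteq E$. The justification rests on the exceptional set: for a transcendental entire function the set of points with finite backward orbit consists of at most one point, so among any three points of $E$ at least one point $z$ is non-exceptional. For such $z$ the full backward orbit $O^-(z)=\{w\in\mathbb{C}: h^n(w)=z \text{ for some } n\in\mathbb{N}\}$ is dense in $J(h)$ (a standard consequence of Montel's theorem, as used in the density statements of \cite{bergweiler1993iteration}). By backwards invariance $O^-(z)\subseteq E$, and since $E$ is closed we obtain $J(h)\subseteq\overline{O^-(z)}\subseteq E$. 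Carrying this out for every $h\in H$ gives $\cup_{h\in H}J(h)\subseteq E$, and taking closures together with the closedness of $E$ yields $J(H)=\overline{\cup_{h\in H}J(h)}\subseteq E$, which is the desired conclusion.

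The main obstacle I anticipate is the careful handling of the single-function step, specifically the role of exceptional points. One must confirm that the hypothesis of \emph{three} points guarantees a non-exceptional point lying in $E$ (here the bound of one exceptional point for entire functions is what matters, and three is more than sufficient, mirroring the rational-semigroup count in \cite[Corollary 3.2]{hinkkanen1996dynamics}), and that the density of the backward orbit of a non-exceptional point in $J(h)$ is invoked correctly. Once this density-plus-closedness argument is pinned down for one function, the passage to the semigroup via $J(H)=\overline{\cup_{h\in H}J(h)}$ is routine.
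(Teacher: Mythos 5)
Your proof is correct, but it takes a genuinely different route from the paper's. The paper argues directly at the semigroup level: setting $U=E^{c}$ (open, since $E$ is closed), backward invariance of $E$ gives $h(U)\subseteq U$ for every $h\in H$, so every element of $H$ omits on $U$ the three or more points of $E$; Montel's theorem then makes $H$ normal on $U$, whence $U\subseteq F(H)$ and $J(H)\subseteq E$ --- a three-line argument that needs nothing beyond Montel and mirrors the rational-semigroup proof in \cite[Corollary 3.2]{hinkkanen1996dynamics}. You instead reduce to the classical one-function minimality, via the bound of at most one exceptional point for a transcendental entire function and the density statement $J(h)\subseteq\overline{O^{-}(z)}$ for non-exceptional $z$, and then reassemble the semigroup Julia set through Poon's theorem $J(H)=\overline{\cup_{h\in H}J(h)}$. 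Your per-$h$ step is sound: complete invariance of $E$ under $H$ is exactly backward invariance under each individual $h$, three points guarantee a non-exceptional point of $E$ for each $h$, and $O^{-}(z)\subseteq E$ together with closedness gives $J(h)\subseteq E$; the final closure step is immediate since $E$ is closed. What the paper's approach buys is brevity and independence from the structure theorem $J(H)=\overline{\cup_{h\in H}J(h)}$ and from the exceptional-point machinery; what yours buys is an explicit accounting of where the ``three points'' hypothesis enters and a transparent link to the classical single-function theory, at the cost of invoking \cite[Theorem 4.2]{poon1998fatou} and the blow-up property from \cite{bergweiler1993iteration}. One small caveat of wording: ``$O^{-}(z)$ is dense in $J(h)$'' should be read as $J(h)\subseteq\overline{O^{-}(z)}$, since the backward orbit need not be contained in $J(h)$ when $z\notin J(h)$; this is indeed the form in which you use it.
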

\begin{proof}
    Let $U=E^c$. Then the family $H$ is normal on $U$ by Montel's Theorem. Therefore, $U\subset F(H)$ and this implies $U^c \supset J(H)$. Hence, $E \supset J(H)$.
\end{proof}
% \begin{corollary} \label{cor 3.19}
%     On substituting $E=\overline{BU(f)}$ where $f$ is a transcendental entire function, we have $J(f)\subset \overline{BU(f)}$
% \end{corollary}

\begin{corollary}\label{cor3.20}
    On taking $E=\overline{BU(H)}$, we obtain $J(H) \subset\overline{BU(H)}$.
\end{corollary}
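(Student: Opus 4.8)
The plan is to apply Theorem \eqref{Th 3.18} with the specific choice $E=\overline{BU(H)}$. Since that theorem asserts that every closed, completely invariant set containing at least three points must contain $J(H)$, the entire task reduces to checking that $\overline{BU(H)}$ meets these three hypotheses. Closedness is immediate, as the closure of any set is closed, so the real content lies in the remaining two conditions.

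For the cardinality requirement, I would show that $BU(H)$ is in fact infinite, which forces its closure to contain far more than three points. By Theorem \eqref{th4.5} we have $BU(H)=\cup_{h\in H}BU(h)$, so it suffices to note that for a single transcendental entire function $h\in H$ the set $BU(h)$ is infinite. This follows from property (vi), which gives $J(h)=\partial BU(h)$: were $BU(h)$ finite, its boundary would be finite, making $J(h)$ finite and contradicting the fact that the Julia set of a transcendental entire function is infinite (indeed perfect). Hence $BU(H)\supseteq BU(h)$ is infinite, and $\overline{BU(H)}$ certainly contains at least three points.

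For complete invariance, I would invoke the observation recorded just before Lemma \eqref{Lemma 4.18}, namely that the closure of a completely invariant set is again completely invariant. Since $BU(H)=\mathbb{C}\setminus(I(H)\cup K(H))$ has already been shown to be completely invariant from the complete invariance of $I(H)$ and $K(H)$ (as noted following Theorem \eqref{Th 4.15}), it follows at once that $\overline{BU(H)}$ is completely invariant. With all three hypotheses in hand, Theorem \eqref{Th 3.18} yields $\overline{BU(H)}\supset J(H)$, that is, $J(H)\subset\overline{BU(H)}$.

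The only genuinely substantive point is the verification that $BU(H)$ contains at least three points; the remaining hypotheses are bookkeeping from results already in place, and I expect the complete-invariance step to be the one most in need of care, since it rests on the complete invariance of $BU(H)$ being available in the ambient setting. As a sanity check, I note that the same inclusion can be extracted directly from part (b) of Theorem \eqref{Th4.5}: since $J(H)=\partial BU(H)\subset\overline{BU(H)}$, the conclusion holds independently of Theorem \eqref{Th 3.18}, which both confirms the corollary and guards against any hidden circularity.
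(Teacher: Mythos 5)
Your proposal is correct and takes essentially the same route as the paper: the corollary is, in the paper, an immediate application of Theorem \eqref{Th 3.18} with $E=\overline{BU(H)}$, and your write-up simply makes explicit the verification of its three hypotheses (closedness, cardinality, complete invariance). The only caveat --- one the paper itself shares --- is that complete invariance of $BU(H)$ is justified in the text only for abelian semigroups (in the remark following Theorem \eqref{Th 4.15}), while the corollary carries no abelian hypothesis, which is why your closing observation that $J(H)=\partial BU(H)\subset\overline{BU(H)}$ already follows from part (b) of Theorem \eqref{Th4.5} is a worthwhile safeguard against that gap.
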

 On combining Theorem \eqref{Th 5.2} and Corollary \eqref{cor3.20},  the following result is now immediate for a transcendental semigroup $H$.
 \begin{proposition} \label{Pro 3.20}
     For a transcendental semigroup $H =[ h_1,h_2,\dots]$ having no oscillatory wandering domain $J(H)=\overline{BU(H)}.$
      \end{proposition}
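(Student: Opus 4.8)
The plan is to obtain the equality $J(H)=\overline{BU(H)}$ as a double inclusion, with each direction supplied by one of the two results just established, so that no new machinery is needed. First I would recall from Theorem \ref{Th 5.2} that, under the standing hypothesis that $H$ has no oscillatory wandering domain, one has $BU(H)\subset J(H)$. Since $J(H)$ is by definition the complement of the open Fatou set $F(H)$, it is closed; hence passing to closures in $BU(H)\subset J(H)$ preserves the inclusion and yields $\overline{BU(H)}\subset\overline{J(H)}=J(H)$. This secures one of the two containments.

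For the reverse containment I would invoke Corollary \ref{cor3.20}, which is obtained by applying Theorem \ref{Th 3.18} with $E=\overline{BU(H)}$ and already gives $J(H)\subset\overline{BU(H)}$. It is worth noting that this direction does not use the no-oscillatory-wandering-domain hypothesis at all: it rests only on the minimality of the Julia set, namely that $J(H)$ is contained in every closed, completely invariant set having at least three points, together with the fact that $\overline{BU(H)}$ is such a set. The latter follows from the earlier complete invariance of $BU(H)$ and the elementary observation (flagged just before Lemma \ref{Lemma 4.18}) that the closure of a completely invariant set is again completely invariant.

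Combining $\overline{BU(H)}\subset J(H)$ with $J(H)\subset\overline{BU(H)}$ immediately gives $J(H)=\overline{BU(H)}$, which completes the argument. I do not anticipate any genuine obstacle: all of the substantive work has been front-loaded into Theorem \ref{Th 5.2} and Corollary \ref{cor3.20}, and the only point needing a word of care is the elementary remark that $J(H)$ is closed, so that taking the closure of the first inclusion does not enlarge the right-hand side beyond $J(H)$ itself.
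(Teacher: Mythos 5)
Your proof is correct and is essentially identical to the paper's own argument: the paper derives the proposition immediately by combining Theorem \ref{Th 5.2} (giving $BU(H)\subset J(H)$, hence $\overline{BU(H)}\subset J(H)$ since $J(H)$ is closed) with Corollary \ref{cor3.20} (giving $J(H)\subset\overline{BU(H)}$). Your additional remark that the second inclusion does not need the no-oscillatory-wandering-domain hypothesis is accurate but does not change the argument.
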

      The following result throws light on the topological aspect of the bungee set of a semigroup.
      \begin{lemma}
       Suppose $H =[ h_1,h_2,\dots]$ is a transcendental semigroup having no oscillatory wandering domain. Then $BU(H)$ cannot be a closed subset of $\mathbb{C}$.
      
      \end{lemma}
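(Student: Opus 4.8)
The plan is to argue by contradiction, the crucial input being the identity $J(H)=\overline{BU(H)}$, which holds precisely because $H$ has no oscillatory wandering domain. Assume, contrary to the assertion, that $BU(H)$ is a closed subset of $\mathbb{C}$, so that $BU(H)=\overline{BU(H)}$. By Proposition \eqref{Pro 3.20} we then obtain $J(H)=\overline{BU(H)}=BU(H)$; that is, under this hypothesis the bungee set would coincide exactly with the Julia set of $H$. This is the single equality I would try to refute.

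The next step is to contradict this equality by examining the fine structure of $J(H)$. Since every generator $h_j$ is transcendental entire, $J(h_j)\neq\emptyset$, and hence $J(H)=\overline{\cup_{h\in H}J(h)}$ is non-empty. By Lemma \eqref{lem 4.8} the repelling periodic points of $H$ are dense in $J(H)$, so $J(H)$ contains at least one repelling periodic point, say $z_0$. Under the standing (to be refuted) equality $J(H)=BU(H)$, this would force $z_0\in BU(H)$. On the other hand, by the very definition of the bungee set, $BU(H)$ contains no periodic point of $H$ — this is exactly the fact already invoked in the proof of Lemma \eqref{lem 4.9}. Thus $z_0\in BU(H)$ and $z_0\notin BU(H)$ simultaneously, which is absurd; therefore $BU(H)$ cannot be closed.

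The step I expect to require the most care is the exclusion of periodic points from $BU(H)$ in the semigroup framework. One must verify that a repelling periodic point of an individual generator $h_j$ cannot simultaneously be a bungee point of the whole semigroup $H$. The natural route is to show that such a point, having a bounded orbit under the relevant generator, in fact lies in the filled Julia set $K(H)$, and then to invoke the disjointness $BU(H)\cap K(H)=\emptyset$ from Proposition \eqref{3.15}; establishing this membership cleanly (rather than merely quoting the single-function analogue) is the only genuinely non-formal point. Once it is granted, the contradiction in the preceding paragraph is immediate, and no further estimates are needed to complete the argument.
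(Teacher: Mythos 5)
Your argument reaches the paper's conclusion by essentially the same overall strategy --- assume $BU(H)$ is closed, deduce $J(H)\subseteq BU(H)$, and contradict this with repelling periodic points --- but the middle step is routed differently. The paper applies the minimality result, Theorem \eqref{Th 3.18}, directly to the closed, completely invariant set $BU(H)$ to get $J(H)\subset BU(H)$ in one stroke; you instead invoke Proposition \eqref{Pro 3.20} to get $J(H)=\overline{BU(H)}=BU(H)$. The difference is not purely cosmetic: your route genuinely uses the hypothesis that $H$ has no oscillatory wandering domain (it enters through Theorem \eqref{Th 5.2} inside Proposition \eqref{Pro 3.20}), whereas the paper's own proof never uses that hypothesis at all --- it needs only closedness, complete invariance, and the periodic-point exclusion. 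Both routes ultimately rest on Theorem \eqref{Th 3.18} and on the claim that periodic points cannot lie in $BU(H)$, so to this extent your proof is correct at the same level of rigor as the paper's.

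One substantive caution about your final paragraph. The repair you propose for the delicate step --- show that a repelling periodic point $z_0$ of a generator $h_j$ lies in $K(H)$ and then invoke $BU(H)\cap K(H)=\emptyset$ from Proposition \eqref{3.15} --- does not work for a general semigroup. Membership in $K(H)$ requires that \emph{every} sequence in $H$ have a subsequence bounded at $z_0$; but some other element $g\in H$ may satisfy $g^n(z_0)\to\infty$, in which case $z_0\notin K(H)$. Worse, in that situation $z_0\in BU(H)$ by Definition \eqref{Def 3.1}, since $\{h_j^n\}$ is bounded at $z_0$ while $\{g^n\}$ escapes there: take $h_j(z)=2\sin z$, $g(z)=e^z$, and $z_0=0$, a repelling fixed point of $h_j$ whose orbit under $g$ tends to infinity. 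So the exclusion of periodic points from $BU(H)$ is not merely ``non-formal''; as stated it can fail in the semigroup setting. This is a gap your proof shares with the paper's own proof (and with Lemma \eqref{lem 4.9}), where the exclusion is asserted without justification, and your instinct that this is the crux is correct --- but the fix you sketch would not close it.
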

\begin{proof}
   Suppose, on the contrary, that $BU(H)$ is closed. Then $BU(H)$ is a closed and completely invariant set containing at least three points. By using Theorem \eqref{Th 3.18}, $J(H)\subset BU(H)$. But this is not possible, as $J(H)$ contains repelling periodic points, which cannot be in the bungee set. Hence, the result.
\end{proof}

 We now construct an example of a semigroup in which the bungee set of the semigroup equals the bungee set of each element in the semigroup.
\begin{example}
    
Let $h_1=e^{\lambda z}$ and $h_2=h_1^{q}+\frac{2\pi i}{\lambda}, \hspace{.2cm }0\neq\lambda \in\mathbb{C}$, $q\in \mathbb{N}$. Let $H=[f,g]$ be a transcendental semigroup. Then, $BU(H)= BU(h)$ for any $h\in H$.

\end{example}
\begin{proof}
   It can be easily seen that $BU(h_1)=BU(h_1^n), \forall\hspace{.2cm} n\in \mathbb{N}$. If $h\in H$, then $h$ is a finite composition of the iterates of $h_1$ and $h_2$. It can be easily seen that either $h=h_1^s$ for some $s\in \mathbb N$ or $h=h_2^k$ for some $k\in \mathbb N$. Therefore, $BU(h)=BU(h_1^s)=BU(h_1)$ or $BU(h)=BU(h_2^k)=BU(h_2)$. In either of the cases, $BU(h)=BU(h_1)=BU(h_2)$. Hence, $BU(h)=BU(h_1)=BU(h_2)$ for any $h\in H$. 
%    \begin{equation}\notag
    \begin{align*}
     Now, \hspace{.2cm} BU(H)
     &=\cup_{h\in H}BU(h)\\
     &=BU(h) \hspace{.2cm }for \hspace{.2cm }any \hspace{.2cm } h\in H\\
     &=BU(h_1)\\
     &=BU(h_2)\\
    \end{align*}
%    \end{equation} 
    hence, the result. 
\end{proof}
We now proceed to investigate a conjugate semigroup. In this direction, we first establish that for two conjugate entire functions, their bungee sets are related by the conjugating map.
\begin{proposition}\label{pro 5.10}
    Suppose $f$ and $g$ are two transcendental entire functions that are conjugate by $\phi$, that is, $\phi \circ f=g \circ \phi$ where $\phi (z)= az+b, \hspace{.2cm} 0\neq a,b \in \mathbb{C}$. Then $\phi(BU(f))=BU(g)$.
    
\end{proposition}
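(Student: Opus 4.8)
The plan is to reduce everything to the single conjugacy identity together with the elementary fact that an invertible affine map neither creates nor destroys the two qualitative behaviours (staying bounded, tending to $\infty$) that define the bungee set. First I would promote the hypothesis $\phi \circ f = g \circ \phi$ to the iterated relation $\phi \circ f^n = g^n \circ \phi$ for every $n \in \mathbb{N}$, which follows by a one-line induction: assuming $\phi \circ f^{n} = g^{n} \circ \phi$, we have $\phi \circ f^{n+1} = (\phi \circ f) \circ f^{n} = (g \circ \phi) \circ f^{n} = g \circ (\phi \circ f^{n}) = g \circ g^{n} \circ \phi = g^{n+1} \circ \phi$. Equivalently, $g^n(\phi(z)) = \phi(f^n(z))$ for all $z \in \mathbb{C}$ and all $n$.

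Next I would record the two estimates that make $\phi(w) = aw + b$ (with $a \neq 0$) compatible with the defining conditions of the bungee set. If $|w| < R$ then $|\phi(w)| \le |a|R + |b|$, so $\phi$ sends bounded sequences to bounded sequences; and since $|\phi(w)| \ge |a|\,|w| - |b|$, if $|w_k| \to \infty$ then $|\phi(w_k)| \to \infty$. The same two inequalities hold for $\phi^{-1}(w) = a^{-1}(w - b)$, which is again an invertible affine map.

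With these in hand, the forward inclusion $\phi(BU(f)) \subseteq BU(g)$ is a direct verification against the definition of $BU$. Let $z \in BU(f)$, so there are subsequences $\{f^{m_k}\}, \{f^{n_k}\}$ and $R > 0$ with $|f^{m_k}(z)| < R$ and $|f^{n_k}(z)| \to \infty$. Applying the iterated conjugacy gives $g^{m_k}(\phi(z)) = \phi(f^{m_k}(z))$ and $g^{n_k}(\phi(z)) = \phi(f^{n_k}(z))$; the two affine estimates then yield $|g^{m_k}(\phi(z))| \le |a|R + |b|$ and $|g^{n_k}(\phi(z))| \to \infty$, so $\phi(z) \in BU(g)$ with witnessing subsequences $\{g^{m_k}\}, \{g^{n_k}\}$.

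Finally, the reverse inclusion costs nothing extra: rewriting $\phi \circ f = g \circ \phi$ as $\phi^{-1} \circ g = f \circ \phi^{-1}$ shows that $\phi^{-1}$ conjugates $g$ to $f$. Hence the argument just given, applied to the pair $(g,f)$ with conjugator $\phi^{-1}$, produces $\phi^{-1}(BU(g)) \subseteq BU(f)$, i.e. $BU(g) \subseteq \phi(BU(f))$; combining the two inclusions gives $\phi(BU(f)) = BU(g)$. I do not anticipate a genuine obstacle. The only point demanding care is that the \emph{same} index subsequences $\{m_k\}, \{n_k\}$ transfer across the conjugacy, which is precisely why the iterated identity $\phi \circ f^n = g^n \circ \phi$ (rather than a mere set-level statement about orbits) is the load-bearing step.
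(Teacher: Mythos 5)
Your proposal is correct and takes essentially the same approach as the paper: both transfer the witnessing subsequences $\{m_k\},\{n_k\}$ through the iterated conjugacy $g^n \circ \phi = \phi \circ f^n$ and use the affine estimates on $\phi$ to preserve boundedness and escape to infinity. Your explicit induction for the iterated identity and your use of $\phi^{-1}\circ g = f\circ\phi^{-1}$ for the reverse inclusion merely spell out details the paper leaves implicit (``on similar line'').
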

\begin{proof}
    Suppose $w \in \phi (BU(f))$, then there exist $z\in BU(f)$ such that $w= \phi(z)$. As $z\in BU(f)$, there exist at least two subsequences $\{m_k\}$ and $\{n_k\}$ tending to infinity such that $|f^{m_k}(z)|< R$ and $|f^{n_k}(z)|$ tends to infinity as $k\to \infty$. Now, $g^{m_k}(w)=g^{m_k}(\phi(z))= \phi(f^{m_k}(z))$. As $\{f^{m_k}(z)\}$ is bounded, so is $\{g^{m_k}(w)\}$. Also, $g^{n_k}(w)=g^{n_k}(\phi(z))=\phi(f^{n_k}(z))$. Since $\phi(z)= az+b$ and $f^{n_k}(z) \to \infty$, we have $g^{n_k}(w) \to \infty$ as $n_k \to \infty$. This shows that $\phi (BU(f)) \subseteq BU(g)$. On similar line, we obtain $BU(g) \subseteq\phi (BU(f))$. Hence, the result.
\end{proof} 
Recall \cite[Definition 4.10]{kumar2016dynamics}, two finitely generated semigroups $H$ and $H'$ are conjugate under a conformal map $\phi: \mathbb{C}\to \mathbb{C}$ if 
\\(a) They have an equal number of generators;\\
(b) Respective generators are conjugate under $\phi.$\\
Consider a finitely generated transcendental semigroup $H=[h_1,h_2,\dots,h_n ]$. Let $H'$ be the conjugate semigroup of $H,$ that is $H'=[\phi \circ h_1\circ \phi ^{-1},\dots, \phi \circ h_n\circ \phi ^{-1} ]$. If $H$ is an abelian transcendental semigroup and its generators are of bounded type, then similar behaviour holds for the semigroup $ H'$. \\
 Using Propositions \eqref{pro 3.20}, \eqref{Pro 3.20}, and \eqref{pro 5.10}, we now show that for two conjugate transcendental semigroups $H$ and $H'$, the closures of $BU(H)$ and $BU(H')$ are equivalent. 
 
 \begin{theorem} \label{Th 5.11}
     Suppose $H=[h_1,h_2,\dots,h_n ]$ is a finitely generated abelian transcendental semigroup where each generator is of bounded type. Let $H'$ be the conjugate semigroup of $H,$ that is $H'=[\phi \circ h_1\circ \phi ^{-1},\dots, \phi \circ h_n\circ \phi ^{-1} ]$, where $\phi : \mathbb{C}\to \mathbb{C}$ is the conjugating map $\phi (z)= az+b, \hspace{.2cm} 0\neq a,b \in \mathbb{C}$. If $H$ has no oscillatory wandering domain, then $\phi \overline{(BU(H)})= \overline{BU(H')}.$
  \end{theorem}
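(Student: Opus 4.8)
The plan is to route the desired identity through the Julia sets of the two semigroups, using the hypothesis ``no oscillatory wandering domain'' precisely to convert each closure $\overline{BU(\cdot)}$ into a Julia set, and then to invoke conformal invariance of the Julia set under the affine conjugacy $\phi$. Concretely, I aim to establish the chain
\[
\phi\bigl(\overline{BU(H)}\bigr)=\phi\bigl(J(H)\bigr)=J(H')=\overline{BU(H')},
\]
where the outer two equalities come from the semigroup identity $J=\overline{BU}$ (Proposition \eqref{Pro 3.20}, itself resting on the single-function version, Proposition \eqref{pro 3.20}), and the middle equality is conjugacy invariance of the Julia set.

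First I would apply Proposition \eqref{Pro 3.20} to $H$: since $H$ has no oscillatory wandering domain, $\overline{BU(H)}=J(H)$. To handle the right-hand side I need the companion identity $\overline{BU(H')}=J(H')$, and for this I must first check that $H'$ itself has no oscillatory wandering domain. This is where the conjugacy does the work. Because $\phi(z)=az+b$ is a conformal homeomorphism of $\mathbb{C}$ and every $h'\in H'$ has the form $\phi\circ h\circ\phi^{-1}$ with $h\in H$, a sequence $\{h'_{k}\}\subset H'$ is normal on a domain $D$ if and only if the corresponding sequence $\{h_{k}\}\subset H$ is normal on $\phi^{-1}(D)$. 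Hence $F(H')=\phi(F(H))$ and $J(H')=\phi(J(H))$, and $\phi$ carries Fatou components of $H$ bijectively onto those of $H'$, preserving the wandering/pre-periodic dichotomy. Moreover, if $\{\infty,b\}$ is the limit set of a sequence of $H$ on a component $U$, then $\{\infty,\phi(b)\}$ is the limit set of the conjugated sequence on $\phi(U)$, since $\phi$ is continuous and fixes $\infty$. Thus oscillatory wandering domains of $H$ and $H'$ are in bijection, and as $H$ has none, neither does $H'$.

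With $H'$ having no oscillatory wandering domain, Proposition \eqref{Pro 3.20} gives $\overline{BU(H')}=J(H')$, and combining the three identities yields $\phi(\overline{BU(H)})=\phi(J(H))=J(H')=\overline{BU(H')}$, as required. As an independent check one can bypass the Julia set entirely: Theorem \eqref{th4.5} gives $BU(H)=\cup_{h\in H}BU(h)$ and likewise $BU(H')=\cup_{h'\in H'}BU(h')$, while Proposition \eqref{pro 5.10} applied to each pair $(h,\phi\circ h\circ\phi^{-1})$ gives $\phi(BU(h))=BU(\phi\circ h\circ\phi^{-1})$; taking unions shows $\phi(BU(H))=BU(H')$, and since the affine map $\phi$ is a homeomorphism it commutes with closure, giving $\phi(\overline{BU(H)})=\overline{BU(H')}$ at once.

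The main obstacle is the conjugacy-invariance step: carefully establishing $\phi(J(H))=J(H')$ and, in particular, that affine conjugation transports the property of having no oscillatory wandering domain. Although normality is visibly preserved under pre- and post-composition with the conformal homeomorphisms $\phi^{\pm1}$, one must verify that the induced transformation of limit functions sends $\{\infty,b\}$ to $\{\infty,\phi(b)\}$ rather than changing the ``oscillatory'' type, so that the classification of wandering domains is genuinely carried across by $\phi$. Once this is secured, the remaining equalities are immediate from Proposition \eqref{Pro 3.20}.
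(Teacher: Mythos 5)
Your proposal is correct, but it follows a genuinely different route from the paper's. The paper reduces everything to a single generator: it cites $J(H)=J(h_i)$, $1\leq i\leq n$, from \cite[Theorem 5.9]{kumar2015dynamics} --- this is exactly where the finite-generation, abelian, and bounded-type hypotheses are consumed --- then converts Julia sets into closures of bungee sets at both the single-function and semigroup levels (Propositions \eqref{pro 3.20} and \eqref{Pro 3.20}), deducing $\overline{BU(H)}=\overline{BU(h_i)}$, and finally applies the single-function conjugacy result (Proposition \eqref{pro 5.10}) to that one generator, running the analogous chain for $H'$. You never use $J(H)=J(h_i)$: your main route stays at the semigroup level throughout, proving $\phi(J(H))=J(H')$ directly and transporting the absence of oscillatory wandering domains across the conjugacy so that Proposition \eqref{Pro 3.20} applies to both $H$ and $H'$; your ``independent check'' dispenses with Julia sets entirely, combining $BU(H)=\cup_{h\in H}BU(h)$ (Theorem \eqref{th4.5} with Remark \eqref{rem 4.1}), Proposition \eqref{pro 5.10} applied to every pair $\bigl(h,\phi\circ h\circ\phi^{-1}\bigr)$, and the fact that the affine homeomorphism $\phi$ commutes with unions and closures. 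As for what each approach buys: the paper's argument produces the intermediate identity $\overline{BU(H)}=\overline{BU(h_i)}$, which is of independent interest, but it requires every hypothesis in the statement (and tacitly also that each $h_i$ individually has no oscillatory wandering domain when Proposition \eqref{pro 3.20} is invoked). Your second argument is more elementary and strictly more general: it yields the sharper identity $\phi(BU(H))=BU(H')$ before any closures are taken, hence the theorem, for an arbitrary transcendental semigroup --- no abelian, bounded-type, finite-generation, or wandering-domain assumptions are needed once the paper's Theorem \eqref{th4.5} and Proposition \eqref{pro 5.10} are granted.
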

  \begin{proof}
   From \cite[Theorem 5.9]{kumar2015dynamics}, $J(H)=J(h_i) \, , 1\leq i\leq n$. Using Proposition \eqref{pro 3.20}, $J(f)=\overline{BU(f)}$ and Proposition \eqref{Pro 3.20} gives $J(H)= \overline{BU(H)}.$ On combining both the results we get $\overline{BU(H)}= \overline{BU(h_i)}, \, 1 \leq i \leq n.$ Thus 
 %\begin{equation}\notag
 \begin{align*}
  \phi \overline{(BU(H))} 
  &=\phi \overline{(BU(h_i))}\\
  &= \overline{(BU(h_i'))}\\
  &= \overline{(BU(H'))}
\end{align*} 
 %\end{equation}
  \end{proof}
 In  \cite[Theorem 2.1]{singh2020bungee}, it was shown that for a transcendental entire function $f$, neither a bounded periodic Fatou component of $f$ nor its boundary intersects with $BU(f)$. We establish a similar result for a transcendental semigroup $H$. 
\begin{theorem}
     For a transcendental semigroup $H$, let $V$ be a bounded periodic component of $F(H)$. Then $\partial V\cap BU(H)=\emptyset$
\end{theorem}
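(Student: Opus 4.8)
The plan is to prove the stronger statement that $\partial V \subseteq K(H)$; since $BU(H)\cap K(H)=\emptyset$ by Proposition \eqref{3.15}, this immediately yields $\partial V\cap BU(H)=\emptyset$. By Remark \eqref{rem 4.13}, a point $z_0$ lies in $K(H)$ precisely when its full forward orbit $\{h(z_0):h\in H\}$ is bounded, so the whole problem reduces to showing that the orbit of every boundary point of $V$ stays bounded. The \emph{bounded}ness of $V$ makes $\overline V$ compact, and the \emph{periodic}ity of $V$ means the collection $\{V_h:h\in H\}$ of components of the various $F(h)$ that contain the images $h(V)$ is finite.

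I would first run the model case of a single function $f$: if $V$ has period $p$ then $f^p(V)\subseteq V$, whence $f^p(\overline V)\subseteq\overline{f^p(V)}\subseteq\overline V$, and therefore $\bigcup_{n\ge 0}f^n(\overline V)\subseteq \overline V\cup f(\overline V)\cup\dots\cup f^{p-1}(\overline V)$ is a finite union of compact sets, hence bounded. In particular every $z_0\in\partial V\subseteq\overline V$ has bounded orbit and lies in $K(f)$. I would then transport this to the semigroup: since $\{V_h\}$ is finite and $\overline V$ is compact, each image $h(\overline V)$ is a compact subset of one of finitely many component closures, and periodicity supplies a ``return'' element carrying the cycle back into $V$, so that $\bigcup_{h\in H}h(\overline V)$ is contained in a finite union of compact pieces and is therefore bounded.

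The main obstacle is making the last sentence rigorous, because, unlike pure iteration of one map, a transcendental semigroup is generated by an arbitrary family $\{h_1,h_2,\dots\}$ and its elements are arbitrary words in the generators, so the forward orbit of $\overline V$ is not simply $\{f^n(\overline V)\}$; moreover $J(H)$ need not be completely invariant, so I cannot freely push $\partial V$ forward. The cleanest workaround is to argue via normality: were $\bigcup_h h(\overline V)$ unbounded, there would be $h^{(k)}\in H$ and $z^{(k)}\in\overline V$ with $|h^{(k)}(z^{(k)})|\to\infty$; passing to a subsequence with $z^{(k)}\to z^*\in\overline V$ and using that $V\subseteq F(H)$ is a normal set on which $V$ is periodic and bounded, one must rule out $\infty$ as a limit function of $H$ on $V$ (precisely the statement that a bounded periodic component is not of Baker/escaping type). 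This non-escaping property of bounded periodic components, together with the finiteness of $\{V_h\}$, is the crux; once it is in hand the orbit of $\overline V$ is bounded and the result follows.

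An alternative, more economical route mirrors the single-function theorem directly: suppose $z_0\in\partial V\cap BU(H)$; by $BU(H)=\bigcup_{h\in H}BU(h)$ (Theorem \eqref{th4.5}) there is $h\in H$ with $z_0\in BU(h)$, and since $V\subseteq F(H)\subseteq F(h)$ the component $V$ sits inside a Fatou component $W$ of $h$ with $z_0\in\partial W$. If $W$ can be shown to be again a bounded periodic component of $h$, then \cite[Theorem 2.1]{singh2020bungee} gives $\partial W\cap BU(h)=\emptyset$, contradicting $z_0\in\partial W\cap BU(h)$. Here the obstacle is transferring ``bounded periodic'' from $V$ to the possibly larger component $W$, which is the same difficulty as above; I would therefore favour the $K(H)$ argument and isolate the non-escaping property of bounded periodic components as a preliminary lemma.
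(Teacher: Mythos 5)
Your first route is correct modulo one step, and that step is precisely the one the paper itself does not prove. Stripped down, both arguments rest on the identical input: a uniform bound $R>0$ with $|h(z)|<R$ for all $h\in H$ and all $z\in V$. The paper obtains this bound by assertion --- its opening sentence reads the hypothesis ``bounded periodic component'' as directly supplying $R$ --- and then derives a contradiction with a single continuity estimate: if $\eta\in\partial V\cap BU(H)$, choose $h_{N_k}$ with $|h_{N_k}(\eta)|>R+\epsilon_0$; continuity of that one function at $\eta$ produces $t\in V$ with $|h_{N_k}(t)-h_{N_k}(\eta)|<\epsilon_0$, contradicting $|h_{N_k}(t)|<R$. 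You instead observe (in effect) that the same bound extends to $\overline V$ by continuity, so every point of $\partial V$ has bounded full orbit, i.e.\ $\partial V\subseteq K(H)$ by Remark \eqref{rem 4.13}, and then Proposition \eqref{3.15} finishes. Your packaging is slightly stronger and cleaner: it yields $\partial V\subseteq K(H)$, hence also $\partial V\cap I(H)=\emptyset$, and it replaces the $\epsilon$--$\delta$ argument with the partition of the plane into $BU(H)$, $K(H)$, $I(H)$.

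Where you and the paper part company is the status of the bound itself. What you call the crux --- ruling out $\infty$ as a limit function on $V$, equivalently bounding $\bigcup_{h\in H}h(V)$ --- is exactly what the paper disposes of in one unproved line. Your hesitation is well-founded: for a semigroup, ``periodic'' only means the collection $\{V_h\}$ is finite, and these are components of the various $F(h)$, which need not be bounded even though $V$ is, so the single-function argument ($f^p(V)\subseteq V$ traps the orbit in finitely many compact sets) has no automatic analogue. Note also that your proposed normality workaround would not quite close it: normality gives locally uniform control on compact subsets of $V$, not a bound valid up to $\partial V$, which is what both proofs need. So you have not missed anything relative to the paper --- you located exactly the gap it glosses over --- but to be complete your proof (like the paper's) must either establish that non-escaping lemma or read the hypothesis as saying that the finitely many orbit components $V_h$ are themselves bounded, in which case $R$ exists trivially and your argument closes. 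Your alternative route via \cite[Theorem 2.1]{singh2020bungee} founders on the same transfer problem, as you yourself note, so setting it aside was the right call.
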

\begin{proof}
   As $V$ is a bounded periodic component of $F(H)$, for some constant $R>0$, we have $h(V)<R,\, \forall \hspace{.2cm} h\in H$. Suppose $\eta \in \partial V \cap BU(H)$. Then there exist a subsequence $h_{n_k}$ of $H$ such that $|h_{n_k}(\eta)|\to \infty$ as $\{n_k\}\to \infty$. For large $n_k$, say $N_k$, $|h_{N_k}(\eta)|> \epsilon_0 +R$ for some $\epsilon_0>0$. Also, $h_{N_k}$ is continuous at $\eta$, there exist $\delta>0$ such that $|z-\eta|<\delta$ implies  $|h_{N_k}(z)-h_{N_k}(\eta)|<\epsilon_0$. If $t\in (|z-\eta|< \delta)\cap V$, then $|h_{N_k}(t)-h_{N_k}(\eta)|<\epsilon_0$. Hence, $|h_{N_k}(t)-h_{N_k}(\eta)| \geq |h_{N_k}(\eta)|-|h_{N_k}(t)|>\epsilon_0$. This leads to a contradiction. Hence, the result.
\end{proof}
In \cite{kumar2020dynamics}, it was shown that for a transcendental entire function $f$, if $P$ is an unbounded completely invariant periodic component of the Fatou set, then $\partial P\cap BU(f)\neq\emptyset$.  
We establish a similar result for a transcendental semigroup $H$.
\begin{lemma}\label{lem 5.13}
    For a transcendental semigroup $H$, let $V$ be an unbounded completely invariant Fatou component, then $\partial V=J(H)$.
\end{lemma}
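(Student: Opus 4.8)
The plan is to show that $\partial V$ is a closed, completely invariant set containing at least three points, so that the minimality property of the Julia set recorded in Theorem~\eqref{Th 3.18} forces $J(H)\subseteq\partial V$, while the reverse inclusion $\partial V\subseteq J(H)$ holds automatically because $V$ is a component of $F(H)$. Combining the two containments then yields $\partial V=J(H)$.

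First I would dispose of the easy inclusion. Since $V$ is a connected component of the open set $F(H)$, no boundary point of $V$ can be a point of normality, so $\partial V\subseteq J(H)$; moreover $\partial V$ is closed by definition, and it is nonempty since $J(H)\neq\emptyset$ forces $V\neq\mathbb{C}$.

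The heart of the argument is the complete invariance of $\partial V$, which I would deduce from that of $V$. For forward invariance, given $z\in\partial V$ I pick $z_n\in V$ with $z_n\to z$; then $h(z_n)\in h(V)\subseteq V$ and $h(z_n)\to h(z)$, so $h(z)\in\overline{V}$, while $h(z)\in V$ would give $z\in h^{-1}(V)\subseteq V$, contradicting $z\in\partial V$; hence $h(z)\in\partial V$. For backward invariance, suppose $h(z)\in\partial V$. Then $z\notin V$ (otherwise $h(z)\in h(V)\subseteq V$), and for any neighbourhood $U$ of $z$ the open-mapping property of the non-constant entire function $h$ makes $h(U)$ an open neighbourhood of $h(z)$, which must meet $V$ because $h(z)\in\overline{V}$; pulling such a point back produces a point of $U\cap h^{-1}(V)\subseteq U\cap V$, so $z\in\overline{V}$ and thus $z\in\partial V$. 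I expect this backward step, which genuinely relies on entire maps being open, to be the main subtlety.

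It remains to check that $\partial V$ has at least three points; in fact I would argue it is infinite. If $\partial V$ were finite, then $\mathbb{C}\setminus\partial V$ is open and connected, and $V$ is a nonempty relatively clopen subset of it, so $V=\mathbb{C}\setminus\partial V$; this would place all but finitely many points in $F(H)$ and make $J(H)$ finite, contradicting that $J(H)$ is an infinite (perfect) set. With $\partial V$ closed, completely invariant, and containing at least three points, Theorem~\eqref{Th 3.18} applied to $E=\partial V$ gives $J(H)\subseteq\partial V$, and together with $\partial V\subseteq J(H)$ this completes the proof.
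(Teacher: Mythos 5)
Your proof is correct, and it leans on the same pillar as the paper's: the minimality property of the Julia set (Theorem~\eqref{Th 3.18}) combined with the automatic inclusion $\partial V\subseteq J(H)$. The difference is which set you feed into the minimality theorem. You take $E=\partial V$, which obliges you to prove that the boundary itself is completely invariant --- your open-mapping argument for backward invariance is the delicate step, and it is carried out correctly --- and also that $\partial V$ is infinite (your clopen-in-$\mathbb{C}\setminus\partial V$ argument is fine, and only the infinitude of $J(H)$, which follows from $J(h)\subseteq J(H)$ for any $h\in H$, is really needed, not perfectness). The paper instead takes $E=\overline{V}$: since the closure of a completely invariant set is completely invariant (an observation recorded in Section~\eqref{sec 5}), $\overline{V}$ is immediately a closed, completely invariant set with at least three points, so minimality gives $J(H)\subseteq\overline{V}$; as $J(H)$ is disjoint from the open set $V$, this already yields $J(H)\subseteq\partial V$, and equality follows from the reverse inclusion. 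The paper's route is shorter because the passage from closure to boundary is deferred to the end, where it costs nothing; your route costs more but buys the stronger intermediate fact that $\partial V$ is itself completely invariant, which the paper never establishes. Incidentally, your version also sidesteps a small overstatement in the paper's write-up, which asserts $J(H)=\overline{V}$ where minimality only yields the containment $J(H)\subseteq\overline{V}$; the containment is all that the argument needs.
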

\begin{proof}
    Let $V$ be a completely invariant Fatou component, then its closure $\overline{V}$ is also completely invariant. By the minimality property of the Julia set, $J(H)=\overline{V}$. But $J(H)$ is disjoint from $V$, hence, $\partial V= J(H)$.
\end{proof}
\begin{theorem}
      For a transcendental semigroup $H$, let $V$ be an unbounded completely invariant periodic component of $F(H)$. Then $\partial V\cap BU(H)\neq\emptyset$.
\end{theorem}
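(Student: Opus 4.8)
The plan is to reduce the statement to the already-established fact that the bungee set meets the Julia set. The key observation is that Lemma \eqref{lem 5.13} identifies the boundary of an unbounded completely invariant Fatou component with the entire Julia set, so the set $\partial V \cap BU(H)$ is really nothing other than $J(H) \cap BU(H)$.

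First I would note that since $V$ is an unbounded completely invariant \emph{periodic} component of $F(H)$, it is in particular an unbounded completely invariant Fatou component; the periodicity hypothesis plays no essential role beyond this. Hence Lemma \eqref{lem 5.13} applies directly and yields $\partial V = J(H)$. Next I would invoke Theorem \eqref{Theo 4.3}, which guarantees $BU(H) \cap J(H) \neq \emptyset$. Substituting $J(H) = \partial V$ from the previous step gives
\[
\partial V \cap BU(H) = J(H) \cap BU(H) \neq \emptyset,
\]
which is precisely the assertion.

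There is no genuine obstacle here: the two inputs---Lemma \eqref{lem 5.13} and Theorem \eqref{Theo 4.3}---combine immediately, and the entire argument is a one-line substitution. The only point worth flagging is that the ``periodic'' hypothesis is redundant once complete invariance is assumed, since Lemma \eqref{lem 5.13} already covers all unbounded completely invariant Fatou components; I would state this explicitly so the reader is not misled into searching for where periodicity is used.

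If one preferred a self-contained argument that does not quote Lemma \eqref{lem 5.13}, one could instead reproduce its content inline: the closure $\overline{V}$ is completely invariant, so by the minimality property of the Julia set $J(H) \subseteq \overline{V}$; since $V \subseteq F(H)$ is disjoint from $J(H)$ this forces $J(H) \subseteq \partial V$, while the reverse inclusion $\partial V \subseteq J(H)$ is automatic because $V$ is a Fatou component. One then concludes as above by appealing to Theorem \eqref{Theo 4.3}.
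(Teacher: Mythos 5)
Your proof is correct and follows exactly the same route as the paper's: apply Lemma \eqref{lem 5.13} to identify $\partial V$ with $J(H)$, then invoke Theorem \eqref{Theo 4.3} to conclude $\partial V \cap BU(H) = J(H) \cap BU(H) \neq \emptyset$. Your additional remark that the periodicity hypothesis is redundant is accurate and a worthwhile observation, but the argument itself matches the paper's proof verbatim in substance.
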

\begin{proof}
    As $V$ is an unbounded completely invariant Fatou component, by Lemma\eqref{lem 5.13}, $\partial V= J(H)$. Also, by Theorem \eqref{Theo 4.3}, $BU(H) \cap J(H)\neq \emptyset $. Hence, $BU(H) \cap \partial V\neq \emptyset $.
\end{proof}

% The next result provides a connection between the Julia set and the bungee set of a finitely generated transcendental semigroup.
% \begin{theorem}
%     Let $H =[ h_1,h_2,\dots],\hspace{.2cm} h_i \in \mathcal{B},$ $1 \leq i \leq n$ be a transcendental semigroup having no oscillatory wandering domain. Then, $J(H)=\overline{BU(H)}.$
% \end{theorem}

% \begin{proof}
%     By Theorem \eqref{Th 3.10}, $\overline{BU(H)}\subset J(H)$. As $J(H)$ is the smallest closed backward invariant set containing at least two points, and the closure of a backward invariant set is backward invariant, that is, $\overline{BU(H)}$ is a closed completely invariant set containing at least two points. Therefore, $J(H) \subset \overline{BU(H)}$. And hence, the result.
   
% \end{proof}
   
        % For an abelian transcendental semigroup $H$. It is easy to say that $BU(H)$ is completely invariant, as a consequence of the complete invariance of both the escaping set and the filled-in Julia set corresponding to $H$.

%   \bibliographystyle{abbrv}
%   \bibliography{ref}

%Acknowledgements. 
%
%The research work of the second  author is supported by ANRF(SERB) research grant TAR/2023/000197

\end{document}